\documentclass[12pt,a4paper]{amsart}
\usepackage{amsmath}
\usepackage{amsthm}
\usepackage{amssymb}
\usepackage{mathrsfs}
\usepackage[british]{babel}
\usepackage{ot1patch}
\usepackage{graphicx}
\usepackage{verbatim}
\usepackage{subcaption}
\usepackage[leftcaption]{sidecap}

\newtheorem{thm}{Theorem}[section]
\newtheorem{lem}[thm]{Lemma}
\newtheorem{cor}[thm]{Corollary}
\newtheorem{prop}[thm]{Proposition}
\newtheorem{que}[thm]{Question}

\theoremstyle{remark}
\newtheorem{rem}[thm]{Remark}
\newtheorem*{rem*}{Remark}

\theoremstyle{definition}
\newtheorem{dfn}[thm]{Definition}
\newtheorem{ex}[thm]{Example}
\numberwithin{equation}{section}

\graphicspath{ {Graphics/} }

\usepackage[giveninits=true]{biblatex}
\addbibresource{references.bib}

\title{Medial Axis in Pseudo-Euclidean Spaces}
\author{Adam Białożyt}
\address{Jagiellonian University, Faculty of Mathematics and Computer Science, Institute of Mathematics, \L ojasiewicza 6, 30-348 Krak\'ow, Poland}\email{adam.bialozyt@uj.edu.pl} 
\keywords{Medial Axis, Skeleton, Radius of Curvature, Dimension, Tangent Cone, O-minimal Geometry}
\subjclass[2010]{32B20, 53A07, 54F45}
\begin{document}
\begin{abstract}
    In the paper we will investigate the notion of medial axis for psuedo-Euclidean spaces. For most of the article we follow the path of Birbrair and Denkowski \cite{BirbrairDenkowski} checking its feasibility in the new context. 
\end{abstract}

\maketitle
%\section{Introduction}
%Graham Reeve bada zbiory symetrii w przestrzeniach Minkowskiego. 
\section{Preliminaries}
\begin{dfn}
We call the pair $(\mathbb{R}^n,Q)$, where $Q$  is a nondegenerated quadratic form, \textit{a pseudo-Euclidean space}. 
\end{dfn}
For any pseudo-Euclidean space $(\mathbb{R}^n,Q)$ there exists exactly one symmetric matrix $A\in M(n)$ such that 
$$Q(x)=x^TA^Tx.$$
For such a matrix and any two vectors $x,y\in\mathbb{R}^n$ we introduce their pseudo-scalar product $\langle\langle x,y\rangle\rangle:=x^TAy$. It is easy to check that the pseudoscalar product is symmetrical, bilinear and nongenerated (meaning $\exists x\forall y \langle\langle x,y\rangle\rangle=0 \Rightarrow x=0 $). From the general theory of quadratic forms it is known that we can fix a orthonormal basis of $\mathbb{R}^n$ in such a way that $A=diag(\lambda_1,\ldots,\lambda_n),\,\lambda_i\neq 0,\, \lambda_i\geq\lambda_{i+1}$. Then $l:=\max\lbrace i\mid\lambda_i>0\rbrace$ is called the index of the pseudo-Eculidean space, and a pair $(l,p):=(l,n-l)$ is called its signature. In place of $\mathbb{R}^n$ it customed to write in such a setting $\mathbb{R}_{(l,p)}$. This notation makes sense as all $\mathbb{R}_{(l,p)}$ are lineary isometric. In this spirit it will not affect the generalicity of our study to assume that all $\lambda_i$ have absolute value equal one. In such setting we denote $$\|x\|_l=(\sum_{i=1}^l x_i^2)^{1/2}\text{ and }\|x\|_p=(\sum_{i=l+1}^p x_i^2)^{1/2}.$$

For any $x\in\mathbb{R}_{(l,p)}$ the value $Q(x)$ is a generalisation of the vector $x$ norm squared. Since we do not assume $Q$ to be positive definite, $Q(x)$ can admit any real value. Henceforth we distinguish in the pseudo-Eulidean spaces the vectors that are 
\begin{itemize}
    \item space like $Q(x)>0$
    \item light like $Q(x)<0$
    \item isotropic $Q(x)=0$
\end{itemize}
Analogously the two dimensional subspace $V$ of $\mathbb{R}_{(l,p)}$ is called 
\begin{itemize}
    \item Euclidean if $Q|V$ is definite (either positive or negative).
    \item pseudo-Euclidean if $Q|V$ is non-singular indefininite.
    \item isotropic in the remaining cases.
\end{itemize}

Since the quadratic form $Q$ is not convex the triangle inequality does not hold in general. However for any two $x,y\in\mathbb{R}_{(l,p)}$ there is
$$|Q(x+y)|\leq |Q(x)|+|Q(y)|,\text{ if } x,y\text{ span an Euclidean subspace of }\mathbb{R}_{(l,p)}$$
$$|Q(x+y)|\geq |Q(x)|+|Q(y)|,\text{ if } x,y\text{ span a pseudo-Euclidean subspace of }\mathbb{R}_{(l,p)}.$$

For a real number $r$ and $x\in\mathbb{R}_{(l,p)}$ we can define also the equivalents of open and closed balls
$$B(x,r):=\lbrace y\in\mathbb{R}_{(l,p)}\mid Q(x-y)< r\rbrace,\;\overline{B}(x,r):=\lbrace y\in\mathbb{R}_{(l,p)}\mid Q(x-y)\leq r\rbrace.$$
Mind that we will use the sets $B(x,r)$ and $\overline{B}(x,r)$ only as a tool in analysis of the mutual position of points in pseudo-Euclidean spaces. When it comes to the typically topological and metric properties like boundedness or compactness we will still mean the properties inherited from the natural topology of $\mathbb{R}^n$  
Mind also that for the simplicity of the definition of $B(x,r)$ we take $r$ to be equal the usual radius squared.

\section{Medial Axis}
For the whole article we will assume that $X\subset \mathbb{R}_{(l,p)}$ is closed in the Euclidean topology.
Then for a point $a\in\mathbb{R}_{(l,p)}$ we define the squared distance function
$$\rho(a,X):=\inf\lbrace Q(x-a)\mid x\in X\rbrace$$
and a multifunction of the closest points
$$m(a)=\lbrace x\in X\mid  Q(x-a)=\rho(a,X)\rbrace.$$
Note here that $m$ has also another equivalent and extremely useful definition
$$m(a)=\lbrace x\in X\mid \forall p\in X,\, Q(a-x)\leq Q(a-p)\rbrace.$$

Contrary to the Euclidean case, in general the function $\rho$ is not bounded from below, and not all points $a$ bring nonempty or compact $m(a)$. Indeed for the set $X=\lbrace x_2=\sqrt{x_1^2+1}\rbrace\subset\mathbb{R}_{(1,1)}$ we have $m(0)=X$ and $m(a)=\emptyset$ for any other $a$ on $x_1-$axis.

Therefore besides the usual notion of \textit{the medial axis}
$$M_X:=\lbrace a\in\mathbb{R}_{(l,p)}\mid \#m(a)>1\rbrace$$

a new potentially interesting set appears --\textit{ the vacant axis} of $X$
$$W_X:=\lbrace a\in\mathbb{R}_{(l,p)}\mid m(a)=\emptyset \rbrace.$$

Moreover, it is not all certain that the multifunction $m$ seen as a relation is reflexive for a general set $X$ (consider points of $X=0\times[0,1]$ in $\mathbb{R}_{(1,1)}$). To enforce reflexivity, we will assume $X$ to be \textit{acausal}\footnote{The name comes from the Lorentzian geometry} meaning for any two distinguished points $x,y\in X$ we assume to have $Q(x-y)>0$. It is plain to see that in such a case $\pi_l:\mathbb{R}_{(l,p)}\to\mathbb{R}^l$, the natural projection on the first $l$ coordinates, must be injective on $X$. If we consider now $\pi_p:\mathbb{R}_{(l,p)}\to\mathbb{R}^p$ to be the natural projection on the last $p$ coordinates, and the composition $\pi_p\circ(\pi_l|X)^{-1}:\pi_l(X)\to\mathbb{R}^p$, it is easy to check that $\pi_p\circ(\pi_l|X)^{-1}$ is $1-$Lipschitz for acausal $X$. Based on that we will call $X$ to be \textit{$L-$pseudo-Lipschitz} if $\pi_p\circ(\pi_l|X)^{-1}$ is $L-$Lipschitz.  It translates to the condition $$L\|x-y\|_l\geq\|x-y\|_p\text{, for all }x,y\in X.$$

\section{multifunctions $m$ and $\mathcal{N}$}

Before delving any deeper into the analysis of medial axis in pseudo-Euclidean spaces, let us justify the choice of restriction we imposed on sets of our interest.
\begin{thm}\label{medial axis of acausal set}
For any point $x$ of an acausal set $X$, $m(x)=\{x\}$ and the intersection $M_X\cap X$ is empty.
\end{thm}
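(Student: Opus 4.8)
The plan is to compute $\rho(x,X)$ directly from the definition and let acausality pin it down. First I would observe that, since $x\in X$, the value $Q(x-x)=Q(0)=0$ is one of the values over which the infimum defining $\rho(x,X)$ is taken, so $\rho(x,X)\le 0$. On the other hand, for every $y\in X$ we have $Q(y-x)\ge 0$: either $y=x$, in which case $Q(y-x)=0$, or $y\neq x$, in which case $Q(y-x)>0$ by acausality. Hence the infimum is bounded below by $0$, and combining the two bounds yields $\rho(x,X)=0$, with the value attained at $y=x$. This is the one place where the acausality hypothesis does genuine work: in general $\rho$ need not be bounded from below, as the example $X=\{x_2=\sqrt{x_1^2+1}\}\subset\mathbb{R}_{(1,1)}$ in the preceding section shows, but acausality rules out exactly that pathology at points of $X$.

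Next I would identify $m(x)$. By definition $m(x)=\{y\in X\mid Q(y-x)=\rho(x,X)\}=\{y\in X\mid Q(y-x)=0\}$. If $y\in X$ and $y\neq x$, then $Q(y-x)>0$ by acausality, so no such $y$ lies in $m(x)$; and $x$ itself clearly lies in $m(x)$ because $Q(0)=0$. Therefore $m(x)=\{x\}$. In particular this also records the reflexivity of $m$ on $X$ that motivated the acausality assumption in the first place.

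Finally, since $\#m(x)=1$ for every $x\in X$, no point of $X$ satisfies the defining condition $\#m(a)>1$ of the medial axis, so $M_X\cap X=\emptyset$. I do not expect a real obstacle here; the argument is a direct unwinding of the definitions. The only point deserving care is to argue that the infimum defining $\rho(x,X)$ is genuinely equal to $0$ and attained, rather than merely $\le 0$ — and that is precisely what the sign condition supplied by acausality guarantees.
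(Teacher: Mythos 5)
Your argument is correct and is essentially the paper's own proof: both rest on the observation that $Q(x-x)=0$ while acausality forces $Q(y-x)>0$ for every other $y\in X$, so the infimum is $0$, attained only at $x$, giving $m(x)=\{x\}$ and hence $M_X\cap X=\emptyset$. Your write-up merely makes the computation of $\rho(x,X)$ more explicit; no difference in substance.
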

\begin{proof}
Taking $x\in X$ we obviously have $Q(x-x)=0$, whereas $$Q(x-y)=\|x-y\|_l^2- \|x-y\|_p^2=\|\pi_l(x)-\pi_l(y)\|^2-\|\pi_p(x)-\pi_p(y) \|^2>0$$ for any point $y\in X$ distinct from $x$. Hence $m(x)=\lbrace x\rbrace$ and $x$ is not a point of $M_X$.
\end{proof}

Thanks to the Theorem~\ref{medial axis of acausal set}, we can define \textit{the normal sets} of an acausal set just like in the Euclidean case.

$$\mathcal{N}(a):=\lbrace x\in\mathbb{R}_{(l,p)}\mid a\in m(x)\rbrace = \lbrace x\in\mathbb{R}_{(l,p)}\mid Q(x-a)=\rho(x,X)\rbrace$$

$$\mathcal{N}'(a):=\lbrace x\in \mathbb{R}_{(l,p)}\mid m(a)=\lbrace a\rbrace\rbrace.$$

\begin{prop}\label{properties of N}
For any $a$ in an acausal definable subset $X$ of $\mathbb{R}_{(l,p)}$ there is
\begin{enumerate}
    \item $a\in\mathcal{N}'(a)\subset\mathcal{N}(a)$,
    \item $\mathcal{N}(a)$ is close convex and definable
    \item $\mathcal{N}(a)-a\subset \mathcal{N}_aX:=\lbrace w\in\mathbb{R}_{(l,p)}\mid \forall v\in C_aX:\; \langle\langle w,v\rangle\rangle\leq 0\rbrace$.
    \item $x\in\mathcal{N}'(a)\Rightarrow [a,x]\subset \mathcal{N}'(a)$,  $x\in\mathcal{N}(a)\backslash\lbrace a\rbrace \Rightarrow [a,x)\subset\mathcal{N}'(a)$
    \item $\limsup_{X\in b\to a}\mathcal{N}(b)\subset\mathcal{N}(a)$
    \item $\mathcal{N}'(a)$ is convex and definable.
\end{enumerate}

\end{prop}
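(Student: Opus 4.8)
The plan is to reduce the whole proposition to one elementary observation. For a fixed $p\in X$ the function
$$g_p(z):=Q(z-p)-Q(z-a)=2\langle\langle z-a,\,a-p\rangle\rangle+Q(a-p)$$
is \emph{affine} in $z$, the degree-two part $Q(z)$ of $Q(z-p)$ and of $Q(z-a)$ having cancelled. Using the second description of $m$ and $a\in X$, we have $z\in\mathcal N(a)$ iff $Q(z-a)\le Q(z-p)$ for every $p\in X$, and $z\in\mathcal N'(a)$ iff moreover $Q(z-a)<Q(z-p)$ for every $p\in X\setminus\{a\}$; hence
$$\mathcal N(a)=\bigcap_{p\in X}\{z:g_p(z)\ge 0\},\qquad\mathcal N'(a)=\bigcap_{p\in X\setminus\{a\}}\{z:g_p(z)>0\}.$$
Since $\langle\langle\cdot,\cdot\rangle\rangle$ is nondegenerate, for $p\ne a$ the affine map $g_p$ is nonconstant, so $\{g_p\ge 0\}$ is a closed half-space and $\{g_p>0\}$ an open one. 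An intersection of closed half-spaces is closed and convex and an intersection of open half-spaces is convex; this already gives the convexity assertions of (2) and (6) and the closedness in (2). Item (1) follows at once: $a\in\mathcal N'(a)$ because $m(a)=\{a\}$ by Theorem~\ref{medial axis of acausal set}, and $\mathcal N'(a)\subset\mathcal N(a)$ because $\{g_p>0\}\subset\{g_p\ge0\}$.

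For the definability in (2) and (6) I would invoke that $X$ is definable: the two displayed sets are $\{z:\forall p\,(p\in X\Rightarrow g_p(z)\ge0)\}$ and $\{z:\forall p\,(p\in X\wedge p\ne a\Rightarrow g_p(z)>0)\}$, i.e. are built from the definable set $X$ by first-order operations, hence definable. Item (5) is a soft closedness argument detached from the affine picture: if $X\ni b_k\to a$ and $\mathcal N(b_k)\ni x_k\to x$, then $b_k\in m(x_k)$ gives $Q(x_k-b_k)\le Q(x_k-p)$ for every $p\in X$; letting $k\to\infty$, continuity of $Q$ and $a\in X$ yield $a\in m(x)$, that is $x\in\mathcal N(a)$.

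For item (3) the single genuine limit argument is needed. Fix $x\in\mathcal N(a)$ and $v\in C_aX$, and choose $X\ni p_k\to a$ and $t_k>0$ with $t_k(p_k-a)\to v$. From $g_{p_k}(x)\ge0$ we obtain $2\langle\langle x-a,\,p_k-a\rangle\rangle\le Q(p_k-a)$; multiplying by $t_k>0$ and writing the right-hand side as $t_k^{-1}Q\bigl(t_k(p_k-a)\bigr)$, I would pass to the limit. If $v\ne0$ then $p_k-a\to0$ forces $t_k\to\infty$, so the right-hand side tends to $0$ while the left-hand side tends to $2\langle\langle x-a,v\rangle\rangle$; hence $\langle\langle x-a,v\rangle\rangle\le0$. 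The case $v=0$ is trivial, so $\mathcal N(a)-a\subset\mathcal N_aX$.

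Item (4) couples affineness with acausality: since $X$ is acausal, $g_p(a)=Q(a-p)>0$ for every $p\in X\setminus\{a\}$. Given $x\in\mathcal N(a)\setminus\{a\}$ and $t\in[0,1)$, put $z=(1-t)a+tx$; affineness gives $g_p(z)=(1-t)g_p(a)+t\,g_p(x)\ge(1-t)g_p(a)>0$ for all $p\ne a$, so, since $a\in X$, the infimum $\rho(z,X)=Q(z-a)$ is attained only at $a$ and $m(z)=\{a\}$, i.e. $z\in\mathcal N'(a)$. This proves $[a,x)\subset\mathcal N'(a)$, and when in addition $x\in\mathcal N'(a)$ the endpoint may be adjoined to give $[a,x]\subset\mathcal N'(a)$. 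The only steps needing care are the rescaling limit in (3) and keeping $a\in X$ in view throughout — so that the relevant infima are genuinely attained and one never drifts into the vacant axis $W_X$; the affine identity for $g_p$ supplies everything else.
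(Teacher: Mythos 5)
Your proof is correct and follows essentially the same route as the paper: the half-space description $\mathcal N(a)=\bigcap_{p\in X}\{Q(\cdot-a)\le Q(\cdot-p)\}$ for (1), (2), (6), the rescaled limit along tangent directions for (3), the affine function $g_p$ restricted to the segment (the paper's degree-one polynomial $R(t)$) together with acausality for (4), and the direct limit argument for (5). If anything, your treatment of the tangent-cone rescaling in (3) (noting $t_k\to\infty$ when $v\neq 0$, so $t_k^{-1}Q(t_k(p_k-a))\to 0$) is stated more carefully than the paper's.
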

\begin{proof}
$(1)$ $a\in\mathcal{N}'(a)$ is given by the previous theorem, the inclusion is trivial.

$(2)$ The set is definable due to the description $$\mathcal{N}(a)=\lbrace x\in \mathbb{R}_{(l,p)}\mid \forall b\in X:\; Q(x-a)\leq Q(x-b)\rbrace$$ since $Q$ and $X$ are definable. If we move the quantifier in front of the set braces we obtain  $$\mathcal{N}(a)=\bigcap_{b\in X} \lbrace x\in \mathbb{R}_{(l,p)}\mid Q(x-a)\leq Q(x-b)\rbrace$$
which gives closedness and convexity of $\mathcal{N}(a)$ as for any $b\in X$ the set $\lbrace x\in \mathbb{R}_{(l,p)}\mid Q(x-a)\leq Q(x-b)\rbrace$ is a closed halfspace.

$(3)$ For any $b\in X$, the set $\mathcal{N}(a)$ is a subset of $$\lbrace x\in \mathbb{R}_{(l,p)}\mid Q(x-a)\leq Q(x-b)\rbrace=\lbrace x+a\mid Q(x+a-a)\leq Q(x+a-b)\rbrace=$$
$$=\lbrace x\mid Q(x)\leq Q(x+(a-b))\rbrace+a=\lbrace x\mid  0\leq 2\langle\langle x,a-b\rangle\rangle+Q(a-b)\rbrace+a.$$
Now, taking $v\in C_aX$ and sequences $b_\nu\to a,\,l_\nu\to 0^+$ such that $l_\nu(b_\nu-a)\to v$ we have
$$\mathcal{N}(a)\subset\lbrace x\mid   2\langle\langle x,l_\nu(b_\nu-a)\rangle\rangle\leq\langle\langle a-b_\nu,l_\nu(b_\nu-a)\rangle\rangle\rbrace+a$$
and by passing to the limit we obtain
$$\mathcal{N}(a)\subset\lbrace x\mid   2\langle\langle x,v\rangle\rangle\leq 0\rbrace+a,\, \forall v\in C_aX$$

$(4)$ For any triple of points $a,b\in X, x\in \mathcal{N}(a)$ let us define a polynomial $$R(t):=Q(tx+(1-t)a-b)-Q(tx+(1-t)a-a).$$ After simplification, $R$ reduces to $R(t)=2t\langle\langle x-a,a-b\rangle\rangle+Q(a-b)$, thus it is a degree $1$ polynomial and $R(0)>0$, $R(1)\geq 0$. Therefore, $R|(0,1)$ is positive and consequently $m(tx+(1-t)a)=\lbrace a\rbrace$ thus $[a,x)\subset \mathcal{N}'(a)$.

$(5)$ Take any sequence $X\ni b_\nu\to a$ and $x_\nu\in\mathcal{N}(b_\nu)\to x$. Then $b_\nu\in m(x_\nu)$ thus $\forall b\in X:\; Q(b_\nu-x_\nu)\leq Q(b-x_\nu).$
After passing to the limit we obtain $Q(a-x)\leq Q(b-x)$ for all $b\in X$ thus $a\in m(b)$.

$(6)$ Follows the same reasoning as $(2).$
\end{proof}

\begin{prop}
For any point $a\in X$, there is $\mathcal{N}(a)=\overline{\mathcal{N}'(a)}$
\end{prop}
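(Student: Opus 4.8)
The plan is to prove the two inclusions separately, leaning entirely on Proposition~\ref{properties of N}; no new machinery is needed.

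For the inclusion $\overline{\mathcal{N}'(a)}\subseteq\mathcal{N}(a)$, I would simply note that part $(1)$ of Proposition~\ref{properties of N} gives $\mathcal{N}'(a)\subseteq\mathcal{N}(a)$, while part $(2)$ says $\mathcal{N}(a)$ is closed. Since the closure of a set is the smallest closed set containing it, the inclusion is immediate.

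For the reverse inclusion $\mathcal{N}(a)\subseteq\overline{\mathcal{N}'(a)}$, I would take an arbitrary $x\in\mathcal{N}(a)$ and distinguish two cases. If $x=a$, then $x\in\mathcal{N}'(a)\subseteq\overline{\mathcal{N}'(a)}$ by part $(1)$. If $x\neq a$, then part $(4)$ of Proposition~\ref{properties of N} yields $[a,x)\subseteq\mathcal{N}'(a)$; writing $x=\lim_{t\to 1^-}\bigl(a+t(x-a)\bigr)$ exhibits $x$ as a limit of points of $\mathcal{N}'(a)$, hence $x\in\overline{\mathcal{N}'(a)}$. Combining the two cases gives the claim.

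I do not anticipate a genuine obstacle: all the substantive content is already packaged in part $(4)$ of the previous proposition (the observation, via the degree-one polynomial $R(t)$, that the half-open segment from $a$ to any point of $\mathcal{N}(a)$ lies in $\mathcal{N}'(a)$). The only point worth a word of care is the degenerate case $\mathcal{N}(a)=\{a\}$, where both sides reduce to $\{a\}$ and the equality is trivial.
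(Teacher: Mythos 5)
Your proposal is correct and follows exactly the paper's own argument: the forward inclusion from closedness of $\mathcal{N}(a)$ together with $\mathcal{N}'(a)\subset\mathcal{N}(a)$, and the reverse inclusion from the segment property $[a,x)\subset\mathcal{N}'(a)$ of Proposition~\ref{properties of N}(4). No differences worth noting.
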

\begin{proof}

$\mathcal{N}(a)$ is closed and 
$\mathcal{N}'(a)\subset\mathcal{N}(a)$ thus $\overline{\mathcal{N}'(a)}\subset \mathcal{N}(a)$.
On the other hand for any $x\in\mathcal{N}(a)\backslash\{a\}$ there is $[a,x)\subset\mathcal{N}'(a)$ thus $x\in \overline{\mathcal{N}'(a)}$.
\end{proof}

\begin{thm} For any closed $X$
$$M_X=\bigcup \mathcal{N}(a)\backslash\mathcal{N}'(a)=\bigcup \mathcal{N}(a)\backslash\bigcup \mathcal{N}'(a)=\mathbb{R}_{(l,p)}\backslash \bigcup \mathcal{N}'(a)\cup W_X.$$
\end{thm}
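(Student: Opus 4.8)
The plan is to reduce the four-way equality to a trichotomy on the cardinality of the fibres of $m$. For every $x\in\mathbb{R}_{(l,p)}$ exactly one of three situations occurs: $m(x)=\emptyset$, i.e. $x\in W_X$; $m(x)$ is a singleton; or $\#m(x)>1$, i.e. $x\in M_X$. Thus $\mathbb{R}_{(l,p)}$ splits into three pairwise disjoint pieces $W_X$, $S:=\{x\mid\#m(x)=1\}$ and $M_X$, and it remains only to identify the two unions occurring in the statement with $\mathbb{R}_{(l,p)}\setminus W_X$ and with $S$ respectively.

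First I would note that any closest point of $X$ lies in $X$, so $x\in\bigcup_{a\in X}\mathcal{N}(a)$ holds precisely when some $a\in X$ satisfies $a\in m(x)$, that is, when $m(x)\neq\emptyset$; hence $\bigcup_{a\in X}\mathcal{N}(a)=\mathbb{R}_{(l,p)}\setminus W_X=S\cup M_X$. Likewise, using the evidently intended reading $\mathcal{N}'(a)=\{x\mid m(x)=\{a\}\}$, the point $x$ lies in $\bigcup_{a\in X}\mathcal{N}'(a)$ exactly when $m(x)$ is a singleton, so $\bigcup_{a\in X}\mathcal{N}'(a)=S$. Since $a\in X$ is forced whenever $\mathcal{N}(a)$ or $\mathcal{N}'(a)$ is nonempty, it is immaterial whether the unions run over $X$ or over all of $\mathbb{R}_{(l,p)}$; note also that Theorem~\ref{medial axis of acausal set} is not invoked here, in accordance with the hypothesis that $X$ is merely closed.

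With these two identifications the remaining equalities are pure bookkeeping. Because $S$ and $M_X$ are disjoint, $\bigcup_{a\in X}\mathcal{N}(a)\setminus\bigcup_{a\in X}\mathcal{N}'(a)=(S\cup M_X)\setminus S=M_X$, and by the partition $\mathbb{R}_{(l,p)}\setminus(\bigcup_{a\in X}\mathcal{N}'(a)\cup W_X)=\mathbb{R}_{(l,p)}\setminus(S\cup W_X)=M_X$. For the term $\bigcup_{a\in X}(\mathcal{N}(a)\setminus\mathcal{N}'(a))$ I would argue by double inclusion: if $\#m(x)>1$, pick any $a\in m(x)$; then $x\in\mathcal{N}(a)$ but $m(x)\neq\{a\}$, so $x\notin\mathcal{N}'(a)$ and $x$ lies in the union; conversely, if $x\in\mathcal{N}(a)\setminus\mathcal{N}'(a)$ for some $a$, then $a\in m(x)$ while $m(x)\neq\{a\}$, which forces $\#m(x)\geq 2$, i.e. $x\in M_X$. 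There is no genuine obstacle in this argument; the only care needed is in keeping the three cardinality regimes straight and in remembering that closest points always belong to $X$.
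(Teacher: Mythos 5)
Your proposal is correct, and it supplies exactly the argument the paper omits (the paper's ``proof'' merely defers to the reference of Birbrair and Denkowski): the trichotomy on $\#m(x)$, the identification of $\bigcup\mathcal{N}(a)$ with the complement of $W_X$ and of $\bigcup\mathcal{N}'(a)$ with the set where $m$ is single-valued, and the double inclusion for $\bigcup(\mathcal{N}(a)\setminus\mathcal{N}'(a))$ are precisely the standard bookkeeping steps. You also correctly repair the typo in the definition of $\mathcal{N}'(a)$ and read the last expression as $\mathbb{R}_{(l,p)}\setminus(\bigcup\mathcal{N}'(a)\cup W_X)$, which is the only parsing under which the equality holds.
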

\begin{proof}
The proof is basically the same as in \cite{BirbrairDenkowski}
\end{proof}

Recall that, we call $X$ to be \textit{$L-$pseudo-Lipschitz} if $\pi_p\circ(\pi_l|X)^{-1}$ (where $\pi_l$ and $\pi_p$ are natural projections on the first $l$ and last $p$ coordinates respectively) is $L-$Lipschitz. We have the following result concerning the  multifunction $m$.

\begin{thm}\label{continuity of m}
Let $X$ be a closed $L-$pseudo-Lipschitz definable subset of $\mathbb{R}_{(l,p)}$ with $L< 1$, then $W_X=\emptyset$. Moreover, the multifunction $m$ has compact values, is locally bounded and upper-semicontinuous, meaning
$\limsup_{D\ni x\to a}m(x)=m(a)$ for any dense subset $D$.
\end{thm}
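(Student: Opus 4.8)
The plan is to work coordinate-wise via the projections $\pi_l$ and $\pi_p$. For a point $a=(a',a'')\in\mathbb{R}^l\times\mathbb{R}^p=\mathbb{R}_{(l,p)}$ and a point $x=(x',x'')\in X$, write $Q(x-a)=\|x'-a'\|^2-\|x''-a''\|^2$. Since $X$ is $L$-pseudo-Lipschitz, the map $\varphi:=\pi_p\circ(\pi_l|X)^{-1}$ is $L$-Lipschitz on $\pi_l(X)\subset\mathbb{R}^l$, and $X$ is the graph of $\varphi$. The first step is to show $\rho(a,X)>-\infty$ and that it is attained. Parametrising $X$ by $u\in\pi_l(X)$, we have $Q((u,\varphi(u))-a)=\|u-a'\|^2-\|\varphi(u)-a''\|^2\ge \|u-a'\|^2-(\|\varphi(u)-\varphi(u_0)\|+\|\varphi(u_0)-a''\|)^2$ for any fixed $u_0$; using $\|\varphi(u)-\varphi(u_0)\|\le L\|u-u_0\|$ and $L<1$, the right-hand side is a function of $\|u-a'\|$ that tends to $+\infty$ as $\|u\|\to\infty$ (the leading term is $(1-L^2)\|u\|^2$). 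Hence the infimum is finite and, since $\pi_l(X)$ is closed (as $X$ is closed and $\pi_l|X$ is a homeomorphism onto its image — this needs a short argument using the Lipschitz bound, which makes $(\pi_l|X)^{-1}$ continuous) and the objective is continuous and coercive, the infimum is attained on a compact set. This gives $m(a)\neq\emptyset$ and compact, so $W_X=\emptyset$, and simultaneously local boundedness: one checks the coercivity estimate is locally uniform in $a$.

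Next I would prove upper-semicontinuity in the strong form stated. Let $D$ be dense, $D\ni x_\nu\to a$, and pick $y_\nu\in m(x_\nu)$. Local boundedness (just established, and valid on a neighbourhood of $a$) ensures $(y_\nu)$ is bounded, so after passing to a subsequence $y_\nu\to y\in X$ (using $X$ closed). For every $b\in X$ we have $Q(y_\nu-x_\nu)\le Q(b-x_\nu)$; passing to the limit gives $Q(y-a)\le Q(b-a)$, so $y\in m(a)$. This shows $\limsup_{D\ni x\to a}m(x)\subset m(a)$. For the reverse inclusion one must produce, for a given $x\in m(a)$, points $y_\nu\in m(x_\nu)$ with $y_\nu\to x$; here density of $D$ and the one-point structure coming from acausality (Theorem~\ref{medial axis of acausal set}) is the natural tool, but note the theorem as stated does not assume $X$ acausal — so I would either invoke the standing hypotheses that actually force acausality in this section, or, more robustly, argue that since $m(a)$ is itself forced to be a single point for $a\in X$ and the graph structure makes $\rho(\cdot,X)$ continuous, the selection $a\mapsto$ (the closest point) is continuous on a dense set and use that $m$ is then the closure of its graph restricted to $D$. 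The cleanest route is: show $\rho(\cdot,X)$ is locally Lipschitz (it is a finite infimum of the uniformly coercive family $u\mapsto Q((u,\varphi(u))-\cdot)$, hence definable and continuous), and deduce the graph of $m$ over $D$ is dense in the graph of $m$; combined with the $\limsup$ containment this yields equality.

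The main obstacle I expect is the reverse containment $m(a)\subset\limsup_{D\ni x\to a}m(x)$: upper-semicontinuity of a multifunction with compact values typically only gives one inclusion for free, and recovering all of $m(a)$ from a merely \emph{dense} approximating set requires genuinely exploiting structure — either the o-minimality of $X$ (to get a definable, hence generically continuous, selection of closest points, and then density lets us hit every branch) or a convexity/monotonicity argument along segments $[a,x]$ in the spirit of part (4) of Proposition~\ref{properties of N}. A secondary technical point to handle carefully is the claim that $\pi_l|X$ is a homeomorphism onto a closed subset of $\mathbb{R}^l$: injectivity is immediate from acausality, continuity of the inverse follows because $\|(\pi_l|X)^{-1}(u)-(\pi_l|X)^{-1}(v)\|^2=\|u-v\|^2+\|\varphi(u)-\varphi(v)\|^2\le(1+L^2)\|u-v\|^2$, and closedness of $\pi_l(X)$ then follows from closedness of $X$; this is routine but must be in place before the coercivity argument can conclude that the infimum is attained.
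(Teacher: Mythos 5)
Your treatment of the existence/compactness/local-boundedness part is sound and is essentially the paper's argument in more explicit form: the paper phrases the coercivity as disjointness of the cone at infinity of $X$ (contained in $\{L\|\cdot\|_l\geq\|\cdot\|_p\}$) from that of the sublevel sets $\{Q(\cdot-a)\leq\mathrm{const}\}$, while you write out the quadratic estimate $\|u-a'\|^2-(L\|u-u_0\|+C)^2\sim(1-L^2)\|u\|^2$ on the graph parametrisation; these are the same idea and both hinge on $L<1$. The outer inclusion $\limsup_{D\ni x\to a}m(x)\subset m(a)$ is also exactly the paper's argument. Your worry about acausality is moot: $L$-pseudo-Lipschitz with $L<1$ forces $Q(x-y)\geq(1-L^2)\|x-y\|_l^2>0$ for distinct $x,y\in X$, so $X$ is automatically acausal and Proposition~\ref{properties of N} applies.

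The genuine gap is the reverse inclusion $m(a)\subset\limsup_{D\ni x\to a}m(x)$, which you correctly flag as the crux but do not carry out; moreover one of your two candidate routes (``$m$ is the closure of its graph restricted to $D$'') is essentially a restatement of what must be proved, and the appeal to generic continuity of a definable selection does not obviously let you ``hit every branch'' of $m(a)$ at a point $a\in M_X$. The paper closes this in two steps, using precisely the segment property you mention. First, for $a\notin M_X$: $m(a)=\{x_0\}$ is a singleton, and since $m$ has nonempty values and is locally bounded, every sequence $y_\nu\in m(x_\nu)$ with $D\ni x_\nu\to a$ has convergent subsequences, all of whose limits lie in $m(a)=\{x_0\}$ by the outer inclusion; hence $y_\nu\to x_0$ and the equality holds at such $a$. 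Second, for $a\in M_X$ and $x\in m(a)$: by Proposition~\ref{properties of N}(4) every $b$ in the open segment $(a,x)$ satisfies $m(b)=\{x\}$, so $b\notin M_X$ and the first step applies at $b$; letting $b\to a$ along the segment and diagonalising over $D$ produces points of $D$ tending to $a$ whose (unique) closest points tend to $x$, giving $x\in\limsup_{D\ni x\to a}m(x)$. Without this two-stage reduction your plan does not yet establish the stated equality, only the one easy containment.
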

\begin{proof}
Since $X$ is $L-$pseudo-Lipschitz, for any $b\in X$ there is $$X\subset B:=\lbrace x\in\mathbb{R}_{(l,p)}\mid \,L\|b-x\|_l\geq \|b-x\|_p\rbrace.$$
In such a case for $a\in \mathbb{R}_{(l,p)}$ and any sequence $X\ni x_\nu\to \infty$ the values of $Q(x_\nu-a)$ diverge to infinity, since the cones at infinity of $\lbrace x\in \mathbb{R}_{(l,p)}\mid Q(x-a)\leq const.\rbrace$ and $B$ are disjoint. It means that $\lbrace x\in X\mid Q(x-a)\leq \rho(a)+\varepsilon\rbrace$ is bounded, and therefore compact and from any sequence realising the infimum $\rho(a)$ we can pick a subsequence convergent in $X$. Since $Q$ is continuous the limiting point in $X$ realises the value $\rho(a)$. Moreover, $$m(a)=\bigcap_{\varepsilon>0}\lbrace x\in X\mid Q(x-a)\leq \rho(a)+\varepsilon\rbrace$$
thus $m(a)$ is compact, as it is closed and bounded.

Regarding local boundedness of $m$, let us star by remarking that $\rho$ is upppersemi-continuous, as it is an infimum of a collection of continuous functions. It means that for any $\varepsilon>0$ we can find $U-$a neighbourhood of $a$ such that for all $\tilde{a}\in U$ there is $\rho(\tilde{a})<\rho(a)+\varepsilon$. Taking a union of $\lbrace x\in X\mid Q(x-\tilde{a})\leq \rho(a)+\varepsilon\rbrace$ over $U$ we obtain a set with the cone at infinity equal to $Q^{-1}((-\infty,0])$ which is disjoint with $C_\infty X$.

To prove one of the inclusions needed for $m$ semicontinuity, take a sequence $D\ni x_\nu\to a$. Then for any sequence $y_\nu\in m(x_\nu)$ convergent to $y$ there is $$Q(y_\nu-x_\nu)\leq Q(y_\nu-p),\, \forall p\in X.$$
The inequality is preserved upon passing to the limit since $Q$ is continuous, thus $$\forall p\in X:\;Q(y-a)\leq Q(y-p),\text{ and }y\in m(a).$$

The opposite inclusion is first proved for $a\notin M_X$.
In such a case $m(a)=\lbrace x_0\rbrace$. Since $m$ has nonempty values and is locally bounded, we can pick a convergent sequence from any $m(a_\nu)$ where $a_\nu\to a$. Any such sequence must converge to $x_0$ due to the previous paragraph.

If $a\in M_X$ then from the first part we can approximate points of $m(b)$ for any $b\in (a,x)$ where $x\in m(a)$. Consequently $m(a)\subset \limsup m(x).$

\end{proof}

\begin{ex}
The condition concerning the Lipschitz constant in the previous theorem is optimal. Indeed, consider $$X=\lbrace (t,t^2/(1+t^2),t)\in\mathbb{R}_{(2,1)}\mid t\in\mathbb{R}\rbrace.$$ It is clearly a $1-$pseudo-Lipschitz set, yet $\rho(a)=-\infty$ and $m(a)$ is empty for any $a\notin V:=\lbrace (v,w,v)\mid w,v\in\mathbb{R}\rbrace$. In the same time for points $\boldsymbol{v}=(v,w,v)\in V$ there is $\rho(\boldsymbol{v})=0$ and $m(\boldsymbol{v})=(v,v^2/(1+v^2),v)$.  
 \end{ex}

\begin{cor}
Let $X$ be a $L-$pseudo-Lipschitz definable subset of $\mathbb{R}_{(l,p)}$ with $L< 1$, then the function $\rho$ is continuous.
\end{cor}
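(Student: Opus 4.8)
The plan is to show that $\rho$ is both upper- and lower-semicontinuous. Upper-semicontinuity is immediate and was in fact already observed inside the proof of Theorem~\ref{continuity of m}: $\rho$ is the pointwise infimum of the family of continuous functions $a\mapsto Q(x-a)$, $x\in X$, hence upper-semicontinuous. Moreover Theorem~\ref{continuity of m} gives $W_X=\emptyset$, so $\rho$ is finite everywhere and $\rho(a)=Q(x-a)$ for every $x\in m(a)$.

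For lower-semicontinuity I would fix $a\in\mathbb{R}_{(l,p)}$ and a sequence $a_\nu\to a$, then pass to a subsequence along which $\rho(a_\nu)\to\liminf_\nu\rho(a_\nu)$. Choosing $x_\nu\in m(a_\nu)$ (non-empty by Theorem~\ref{continuity of m}) and using local boundedness of $m$, all the $x_\nu$ lie in one fixed compact subset of $X$ once $a_\nu$ is close enough to $a$, so after a further subsequence $x_\nu\to x$ for some $x\in X$. The inclusion $\limsup_{b\to a}m(b)\subset m(a)$ established in the proof of Theorem~\ref{continuity of m} then yields $x\in m(a)$, and by continuity of $Q$ we get $\rho(a_\nu)=Q(x_\nu-a_\nu)\to Q(x-a)=\rho(a)$ along this subsequence; hence $\liminf_\nu\rho(a_\nu)\geq\rho(a)$.

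Combining $\limsup_\nu\rho(a_\nu)\leq\rho(a)\leq\liminf_\nu\rho(a_\nu)$ gives $\rho(a_\nu)\to\rho(a)$, i.e.\ $\rho$ is continuous. I do not expect a real obstacle: all the analytic content — non-emptiness and local boundedness of $m$, together with the semicontinuity inclusion — is already packaged in Theorem~\ref{continuity of m}, so the corollary is essentially a corollary in the literal sense. The only point deserving a little care is to invoke the direction $\limsup_{b\to a}m(b)\subset m(a)$, which holds for \emph{every} convergent sequence $a_\nu\to a$ (this is exactly the first inclusion proved in Theorem~\ref{continuity of m}), rather than the reverse inclusion that the theorem phrases only for a dense set $D$.
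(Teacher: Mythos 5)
Your proposal is correct; all the ingredients you invoke (non-emptiness of $m$ from $W_X=\emptyset$, local boundedness of $m$, and the closed-graph inclusion $\limsup_{b\to a}m(b)\subset m(a)$, which is indeed proved in Theorem~\ref{continuity of m} for arbitrary convergent sequences, not only those in a dense set) are available without circularity, since local boundedness rests only on upper-semicontinuity of $\rho$ as an infimum of continuous functions. The execution differs from the paper's: the paper also takes the bound $\rho(y)\leq Q(y-b)$, $b\in m(x)$, for the $\limsup$ direction, but for the $\liminf$ direction it does not extract convergent subsequences of closest points. Instead it uses upper-semicontinuity of $m$ to find a relatively compact $V$ with $m(U)\subset V$, writes $\rho(y)=\inf_{a\in X\cap V}Q(y-a)$, and expands $Q(y-a)=Q(y-x)+2\langle\langle y-x,x-a\rangle\rangle+Q(x-a)$ to get the quantitative estimate $\liminf\rho(y)\geq\rho(x)$ from boundedness of $\|x-a\|$ on $V$. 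Your sequential-compactness route is cleaner and avoids the bilinear expansion; the paper's computation is more explicit and effectively yields a local modulus of continuity. One small simplification available to you: once $x_\nu\to x\in X$, the inequality $Q(x-a)\geq\rho(a)$ already gives $\liminf_\nu\rho(a_\nu)\geq\rho(a)$, so you do not actually need the stronger conclusion $x\in m(a)$.
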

\begin{proof}
Choose $x\in\mathbb{R}_{(l,p)}$. Since the multifunction $m$ is upper-semi\-con\-ti\-nu\-ous, for any $V$ - the neighbourhood of $m(x)$ there exists $U$ - a neighbourhood of $x$ such that $m(U)\subset V$. Take $V$ relatively compact, then for any point $y\in U$ there is $\rho(y)=\inf\lbrace Q(y-a)\mid a\in X\cap V\rbrace $. Now 
$$\inf_{a\in X\cap V} \lbrace Q(y-a)\rbrace = Q(y-x)+\inf_{a\in X\cap V}\lbrace 2\langle\langle y-x,x-a\rangle\rangle +Q(x-a)\rbrace.$$
The term $Q(y-x)$ tends to zero when $y\to x$, the infimum on the other hand can be bounded from below by $$\inf_{a\in X\cap V}\lbrace 2\langle\langle y-x,x-a\rangle\rangle +Q(x-a)\rbrace \geq\inf_{a\in X\cap V}\lbrace2\langle\langle y-x,x-a\rangle\rangle\rbrace +\rho(x).$$ Since $V$ was relatively compact the norm of $x-a$ is bounded and $\langle\langle y-x,x-a\rangle\rangle$ tends to zero as $y\to x$, thus $\liminf \rho(y)\geq \rho(x)$. 
On the other hand  $\rho(y)\leq Q(y-b)$ for any $b\in m(x)$. Since $Q$ is continuous we obtain $\limsup \rho(y)\leq Q(x-b)=\rho(x).$
\end{proof}

\begin{thm}\label{gradient rho}
Let $X$ be a $L-$pseudo-Lipschitz definable subset of $\mathbb{R}_{(l,p)}$ with $L< 1$, then $\rho$ is locally lipschitz and differentiable for any point $x\in \mathbb{R}_{(l,p)}\backslash M_X$. Moreover for any such $x$ there is $\nabla \rho(x)=2A(x-m(x))$ thus $\rho$ is $\mathscr{C}^1$ smooth in $\mathbb{R}_{(l,p)}\backslash\overline{M_X}$
\end{thm}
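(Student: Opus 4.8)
The plan is to derive everything from the elementary identity $Q(u-v)=Q(u)-2\langle\langle u,v\rangle\rangle+Q(v)$, the estimates $|Q(v)|\le\|v\|^2$ and $|\langle\langle u,v\rangle\rangle|\le\|u\|\,\|v\|$ (the diagonal matrix $A$ has operator norm one after our normalisation), together with the properties of $m$ from Theorem~\ref{continuity of m}: $m$ is nonempty, compact valued, locally bounded and upper semicontinuous. First, for local Lipschitzness: fix $a_0$, and by local boundedness choose a bounded neighbourhood $U\ni a_0$ and $R>0$ with $\|x-a\|\le R$ whenever $a\in U$ and $x\in m(a)$. For $a,b\in U$ take $x\in m(a)$ and use it as a competitor for $\rho(b)$: then $\rho(b)\le Q(x-b)=\rho(a)+2\langle\langle x-a,a-b\rangle\rangle+Q(a-b)\le\rho(a)+(2R+\|a-b\|)\|a-b\|$. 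Interchanging $a$ and $b$ gives $|\rho(a)-\rho(b)|\le(2R+\operatorname{diam}(U))\|a-b\|$, so $\rho$ is locally Lipschitz on $\mathbb{R}_{(l,p)}$.

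Next, differentiability at a point $x\notin M_X$, where $m(x)=\{x_0\}$. Using $x_0\in X$ as a competitor for $\rho(x+h)$ gives the upper bound $\rho(x+h)\le Q((x-x_0)+h)=\rho(x)+2\langle\langle x-x_0,h\rangle\rangle+Q(h)$ with $|Q(h)|\le\|h\|^2$. For the matching lower bound pick $y_h\in m(x+h)$; since $y_h\in X$ is a competitor for $\rho(x)$ we get $\rho(x+h)=Q((y_h-x)-h)=Q(y_h-x)-2\langle\langle y_h-x,h\rangle\rangle+Q(h)\ge\rho(x)-2\langle\langle y_h-x,h\rangle\rangle+Q(h)$. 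The crucial observation is that $y_h\to x_0$ as $h\to0$: the family $(y_h)$ is bounded, and if $y_{h_\nu}\to y$ along some $h_\nu\to0$, then passing to the limit in $Q((x+h_\nu)-y_{h_\nu})\le Q((x+h_\nu)-p)$, valid for all $p\in X$, shows $Q(x-y)\le Q(x-p)$, i.e. $y\in m(x)=\{x_0\}$; so every subsequential limit is $x_0$ and the whole family converges. Writing $y_h-x=(y_h-x_0)+(x_0-x)$ we obtain $2\langle\langle y_h-x,h\rangle\rangle=2\langle\langle x_0-x,h\rangle\rangle+2\langle\langle y_h-x_0,h\rangle\rangle$ with $|2\langle\langle y_h-x_0,h\rangle\rangle|\le2\|y_h-x_0\|\,\|h\|=o(\|h\|)$, and combining the two estimates yields $\rho(x+h)=\rho(x)+2\langle\langle x-x_0,h\rangle\rangle+o(\|h\|)$. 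Since $A$ is symmetric, $2\langle\langle x-x_0,h\rangle\rangle=\langle 2A(x-x_0),h\rangle$ for the Euclidean pairing, hence $\rho$ is differentiable at $x$ with $\nabla\rho(x)=2A(x-m(x))$.

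Finally, on the open set $\mathbb{R}_{(l,p)}\setminus\overline{M_X}$ the multifunction $m$ is single valued, and a single valued, locally bounded, upper semicontinuous multifunction is continuous there by the same closed graph argument as above (every limit of $m(x_\nu)$ with $x_\nu\to x$ lies in the singleton $m(x)$). Therefore $x\mapsto 2A(x-m(x))$ is continuous on $\mathbb{R}_{(l,p)}\setminus\overline{M_X}$, so $\nabla\rho$ is continuous there and $\rho\in\mathscr{C}^1$ on that set.

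I expect the one point genuinely requiring care to be the control of the error term in the lower estimate, that is, the convergence $y_h\to m(x)$ for \emph{all} $h\to0$ rather than along a dense set: the upper semicontinuity recorded in Theorem~\ref{continuity of m} is phrased along dense subsets, so one must note that at $x\notin M_X$, where $m(x)$ is a single point, local boundedness together with the closed graph property upgrade it to honest continuity of $m$ at $x$ — which is exactly what legitimises the $o(\|h\|)$ bound and hence the differentiability. Everything else is a routine expansion of the quadratic form $Q$.
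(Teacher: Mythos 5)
Your proof is correct, and it takes a genuinely different and more elementary route than the paper's. The paper first establishes differentiability only almost everywhere via Rademacher's theorem (upgraded to an open dense set by o-minimality), then computes the gradient there using the definable differentiability of the selection of $m$ and a tangent-cone argument, and finally invokes the theory of Clarke's subdifferential together with Theorem~\ref{continuity of m} to extend the formula to every point where $m$ is single valued. You bypass all of that with the classical two-sided ``sandwich'' estimate: the competitor $x_0=m(x)$ gives the upper bound $\rho(x+h)\le\rho(x)+2\langle\langle x-x_0,h\rangle\rangle+O(\|h\|^2)$, and a point $y_h\in m(x+h)$ used as a competitor for $\rho(x)$ gives the matching lower bound once you know $y_h\to x_0$ — which you correctly extract from local boundedness plus the closed-graph property at a point where $m(x)$ is a singleton (this is indeed the step that needs care, since the semicontinuity in Theorem~\ref{continuity of m} is stated along dense subsets). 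Your argument needs only that $m$ has nonempty values and is locally bounded (so definability enters only through the cited theorem, not through cell decomposition or Clarke calculus), it proves honest differentiability at every point of $\mathbb{R}_{(l,p)}\backslash M_X$ directly rather than first on a dense set, and your direct competitor computation of the local Lipschitz constant is a clean instance of the paper's ``infimum of uniformly Lipschitz functions'' argument. The concluding $\mathscr{C}^1$ step via continuity of the single-valued $m$ on the open set $\mathbb{R}_{(l,p)}\backslash\overline{M_X}$ matches the paper's intent and is fully justified by your closed-graph argument.
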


\begin{proof}
For any $a\in \mathbb{R}_{(l,p)}$ the multifunction $m$ is locally bounded. It means that we can find an open Euclidean ball $B_a$ and $R>0$ such that for any $b\in B_a$ there is $m(b)\subset B(0,R)$. Then $\rho$ restricted to $B_a$ is given as an infimum of functions $Q_b(x):=Q(x-b)$ with $b\in B(0,R)\cap X$. Every $Q_b$ is Lipschitz in $B_a$ and the Lipschitz constants are uniformely bounded by certain $M>0$. Thus, $\rho$ is $M-$Lipschitz in $B_a$ as well.

Since $\rho$ is locally lipschitz, the Rademacher theorem asserts that it is differentiable almost everywhere. In o-minimal structures it translates to the differentiability on an open and dense subset of $\mathbb{R}_{(l,p)}.$

For $x\notin M_X$ we investigate now the quotient $$(\rho(x+h)-\rho(x)-2(x-m(x))^TAh)/\|h\|=$$
$$(Q(x+h-m(x+h))-Q(x-m(x))-2\langle\langle x-m(x),h\rangle\rangle)/\|h\|=$$
$$(Q(m(x+h))-Q(m(x))-2\langle\langle x+h,m(x+h)-m(x)\rangle\rangle+o(\|h\|))/\|h\|.$$

Where the second equality is due to $Q(x+h)-Q(x)-2\langle\langle x,h\rangle\rangle=o(\|h\|).$
We can further simplify the last line, down to
\begin{equation}
\label{eqn:star}\langle\langle m(x+h)+m(x)-2x-2h,\frac{m(x+h)-m(x)}{\|h\|}\rangle\rangle.\end{equation}
Now, since $m$ is definable it is differentiable in an open and dense subset of $\mathbb{R}_{(l,p)}$. Therefore, for almost all $x$ we can write \begin{equation}\label{eqn:star2}\frac{m(x+h)-m(x)}{\|h\|}=\langle\nabla m(x),h/\|h\|\rangle +o(\|h\|).\end{equation}
Clearly, the quotient \ref{eqn:star2} is bounded and implies the boundedness of the expression $\ref{eqn:star}$.

Moreover $m(x+h)\in X$ and $m(x+h)\to m(x)$ when $h\to 0$. It means, that any accumulation point of $\frac{m(x+h)-m(x)}{\|h\|}$ ( $h\to 0$) is an element of the tangent cone $C_{m(x)}X$. For any sequence $h\to 0$ yeilding a convergent expression $\ref{eqn:star}$ we obtain therefore a limit $$2\langle\langle m(x)-x,v\rangle\rangle,\, v\in C_{m(x)}X$$
which has to equal zero. Indeed, a lower estimate is given by 
Proposition~\ref{properties of N}. On the other hand, should $\langle\langle m(x)-x,v\rangle\rangle$ become positive, for $h$ small enough we would obtain $Q(x+h-m(x+h))>Q(x+h-m(x))$ which is a contradiction.

For an open and dense subset of $\mathbb{R}_{(l,p)}$ we have then $$\nabla \rho(x)=2A(x-m(x)).$$ The general theory of Clarke's subdifferential and theorem~\ref{continuity of m} assert now that $\rho$ is differentiable whenever $m(x)$ is a singleton. Moreover $\rho$ is $\mathscr{C}^1$ smooth outside the closure of $M_X$.

\end{proof}

\begin{cor}
For any $x\notin M_X$ there is $m(x)=x-A\nabla\rho/2.$
\end{cor}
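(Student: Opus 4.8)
The plan is to read off the identity directly from Theorem~\ref{gradient rho}. For any point $x\notin M_X$ that theorem gives $\nabla\rho(x)=2A(x-m(x))$, where $m(x)$ is understood as the unique point of the singleton $m(x)$.

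First I would recall that $A$ is the symmetric matrix of the quadratic form $Q$, which is nondegenerate, hence invertible; in the normalised setting of the Preliminaries $A=\operatorname{diag}(\lambda_1,\dots,\lambda_n)$ with every $\lambda_i=\pm1$, so in fact $A^2=I$ and $A^{-1}=A$. Thus multiplying the identity $\nabla\rho(x)=2A(x-m(x))$ on the left by $\tfrac12 A^{-1}=\tfrac12 A$ yields $\tfrac12 A\nabla\rho(x)=x-m(x)$, and rearranging gives $m(x)=x-\tfrac12 A\nabla\rho(x)$, which is exactly the claimed formula $m(x)=x-A\nabla\rho/2$.

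The only thing to check is that the manipulation is legitimate, i.e. that $\nabla\rho(x)$ genuinely exists at every $x\notin M_X$ and that $m(x)$ is a genuine point rather than a set; both are furnished by Theorem~\ref{gradient rho} together with Theorem~\ref{continuity of m} (which guarantees $m(a)$ is nonempty and, for $a\notin M_X$, a singleton). So there is no real obstacle here — the corollary is a one-line algebraic rearrangement of the gradient formula using invertibility of $A$.

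\begin{proof}
By Theorem~\ref{gradient rho}, for $x\notin M_X$ the function $\rho$ is differentiable at $x$ and $\nabla\rho(x)=2A(x-m(x))$, where $m(x)$ denotes the unique point of the singleton $m(x)$ (nonempty and single-valued by Theorem~\ref{continuity of m}). Since $Q$ is nondegenerate, $A$ is invertible, and in the normalised coordinates of Section~1 one has $A=\operatorname{diag}(\lambda_1,\dots,\lambda_n)$ with $|\lambda_i|=1$, so $A^{-1}=A$. Multiplying $\nabla\rho(x)=2A(x-m(x))$ by $\tfrac12 A$ on the left gives $\tfrac12 A\nabla\rho(x)=x-m(x)$, hence $m(x)=x-A\nabla\rho/2$.
\end{proof}
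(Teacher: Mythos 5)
Your proof is correct and follows exactly the paper's argument: observe that in the normalised coordinates $A^2=I$, so $A^{-1}=A$, and rearrange the gradient formula $\nabla\rho(x)=2A(x-m(x))$ from Theorem~\ref{gradient rho}. The extra care you take in noting that $m(x)$ is a singleton for $x\notin M_X$ is a reasonable (if implicit in the paper) addition.
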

\begin{proof}
It simply suffices to observe that $A^2=I$. Then the previous theorem gives the assertion.
\end{proof}

\begin{thm}
Let $X$ be a $L-$pseudo-Lipschitz definable subset of $\mathbb{R}_{(l,p)}$ with $L< 1$ then
\begin{enumerate}
    \item $\partial \rho(x)=cnv\lbrace 2A(x-y)\mid y\in m(x)\rbrace=\lbrace 2A(x-y)\mid y\in cnv\,m(x)\rbrace$
    \item $x\in M_X\iff \#\partial \rho (x)\neq 1\iff x\notin D_\rho$.
    \item $\nabla \rho=0\iff x\in X$
    \item $\nabla \rho$ is continuous in $\mathbb{R}_{(l,p)}\backslash M_X$
\end{enumerate}
\end{thm}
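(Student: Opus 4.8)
The plan is to reduce all four items to the formula $\nabla\rho(x)=2A(x-m(x))$ valid on $\mathbb{R}_{(l,p)}\setminus M_X$ (Theorem~\ref{gradient rho}), to the upper semicontinuity, compactness and local boundedness of $m$ (Theorem~\ref{continuity of m}), to the segment property $x\in\mathcal{N}(a)\setminus\{a\}\Rightarrow[a,x)\subset\mathcal{N}'(a)$ of Proposition~\ref{properties of N}(4), and to the Rademacher-type description of the Clarke subdifferential, $\partial\rho(x)=cnv\{\,\lim_\nu\nabla\rho(x_\nu):x_\nu\to x,\ \rho\text{ differentiable at }x_\nu\,\}$. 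Two standing facts will be used throughout: $A^2=I$, and, since $L<1$, the set $X$ is acausal, so by Theorem~\ref{medial axis of acausal set} one has $m(x)=\{x\}$ for $x\in X$ and $M_X\cap X=\emptyset$.

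First I would pin down the set $D_\rho$ of differentiability points. If $\rho$ is differentiable at $x$, then for every $y\in m(x)$ the function $h\mapsto Q(x+h-y)-\rho(x+h)$ is nonnegative, vanishes at $h=0$, and is differentiable there; as $h=0$ is an interior point its differential must vanish, which forces $\nabla\rho(x)=2A(x-y)$. Since $A$ is invertible, $y$ is thereby uniquely determined, so $\#m(x)=1$ and $x\notin M_X$. Together with Theorem~\ref{gradient rho} this gives $D_\rho=\mathbb{R}_{(l,p)}\setminus M_X$; in particular the non-differentiability set of $\rho$ is exactly $M_X$, which is the null set one feeds into the Clarke formula above.

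Now to part (1). For the inclusion ``$\subseteq$'', any $g=\lim_\nu 2A(x_\nu-m(x_\nu))$ with $x_\nu\to x$ and $x_\nu\notin M_X$ satisfies $m(x_\nu)\to x-\tfrac12 Ag=:y$ (using $A^2=I$), and upper semicontinuity of $m$ yields $y\in m(x)$, whence $g=2A(x-y)$. For ``$\supseteq$'', fix $y\in m(x)$; the case $x=y\in X$ is trivial, and otherwise $x\in\mathcal{N}(y)\setminus\{y\}$, so Proposition~\ref{properties of N}(4) places the segment $[y,x)$ inside $\mathcal{N}'(y)\subset D_\rho$, and choosing $x_\nu\in(y,x)$ with $x_\nu\to x$ gives $\nabla\rho(x_\nu)=2A(x_\nu-y)\to 2A(x-y)$; convexity of $\partial\rho(x)$ then delivers the whole convex hull. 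The second equality in (1) is the elementary fact that the affine map $y\mapsto 2A(x-y)$ carries $cnv\,m(x)$ onto $cnv\{2A(x-y):y\in m(x)\}$. Part (2) follows at once: since $A$ is invertible, $\#\partial\rho(x)=\#\,cnv\,m(x)$, and a nonempty compact set has a one-point convex hull exactly when it is a singleton, i.e. exactly when $x\notin M_X$; combined with $D_\rho=\mathbb{R}_{(l,p)}\setminus M_X$ from the previous paragraph, all three conditions in (2) are equivalent. For part (3), on $\mathbb{R}_{(l,p)}\setminus M_X$ we get $\nabla\rho(x)=0\iff x=m(x)\iff x\in X$, the last step using $m(x)\subset X$ in one direction and $m(x)=\{x\}$ for $x\in X$ in the other, while on $M_X$ the gradient is undefined and $x\notin X$ anyway. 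Finally, part (4): the Clarke subdifferential of a locally Lipschitz function is an upper semicontinuous multifunction with nonempty compact values, and a single-valued upper semicontinuous multifunction is continuous as a map; since $\partial\rho=\{\nabla\rho\}$ on $\mathbb{R}_{(l,p)}\setminus M_X$ by (2), $\nabla\rho$ is continuous there. (Equivalently one argues that $m$ restricted to $\mathbb{R}_{(l,p)}\setminus M_X$ is single-valued, locally bounded and upper semicontinuous, hence continuous, and then $\nabla\rho=2A(\mathrm{id}-m)$.)

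The step I expect to be the genuine obstacle is the first one: Clarke's calculus gives directly only that a singleton subdifferential is equivalent to \emph{strict} differentiability, not to ordinary differentiability, so one must notice that $h=0$ is an interior minimum of the smooth-minus-$\rho$ function $h\mapsto Q(x+h-y)-\rho(x+h)$ in order to force uniqueness of the closest point at every differentiability point. Once $D_\rho=\mathbb{R}_{(l,p)}\setminus M_X$ is established, the rest is bookkeeping with Theorems~\ref{medial axis of acausal set}, \ref{continuity of m}, \ref{gradient rho} and Proposition~\ref{properties of N}.
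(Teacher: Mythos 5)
The paper states this theorem without any proof, so there is no argument of the author's to compare yours against; judged on its own, your proof is correct and follows the route the paper clearly intends (Clarke's subdifferential calculus, as already invoked in the proof of Theorem~\ref{gradient rho}, mirroring the Euclidean treatment in \cite{BirbrairDenkowski}). The one step that genuinely needs the care you give it is the identification $D_\rho=\mathbb{R}_{(l,p)}\setminus M_X$: Theorem~\ref{gradient rho} only supplies the inclusion $\mathbb{R}_{(l,p)}\setminus M_X\subset D_\rho$, and your converse --- that differentiability of $\rho$ at $x$ forces $\#m(x)=1$ because $h\mapsto Q(x+h-y)-\rho(x+h)$ has an interior minimum at $h=0$ for every $y\in m(x)$, whence $2A(x-y)=\nabla\rho(x)$ and invertibility of $A$ pins down $y$ --- is exactly the missing ingredient (it uses $m(x)\neq\emptyset$, i.e.\ $W_X=\emptyset$ from Theorem~\ref{continuity of m}). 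The remaining items then follow as you say: the $\subseteq$ half of (1) from the gradient-limit description of $\partial\rho$ together with upper semicontinuity of $m$, the $\supseteq$ half from Proposition~\ref{properties of N}(4) applied along $[y,x)$, (2) and (3) from invertibility of $A$ and acausality of $X$ (which indeed follows from $L<1$), and (4) from upper semicontinuity of a single-valued compact-valued multifunction. No gaps.
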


\begin{que}
Is it true that $M_X\neq \emptyset$ if $\pi_l(X)$ do not contain any half-space?
\end{que}

\begin{lem}
Let $X \subset \mathbb{R}^n$ be a closed, $L-$pseudo-Lipschitz set, $x_0 \in\mathbb{ R}^n \backslash X$ a point and $B = B(x_0 , r)$ a pseudo-ball such that $B \cap X = \bigcup ^k_{j=1} X_j$ where the sets $X_j$ are nonempty and pairwise disjoint, and for at least one $i$,
$X_i \cap B \neq \emptyset$. Then there exists a neighbourhood $U$ of $x_0$ such that $$\rho(x,X)=\min_{j=1}^k \rho(x,X_j),\, x\in U$$
\end{lem}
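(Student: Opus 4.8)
The plan is to reduce the statement to the upper semicontinuity and local boundedness of the closest–point multifunction $m$ established in Theorem~\ref{continuity of m}; throughout we use, as in the rest of this part of the paper, that $L<1$, so that in particular $W_X=\emptyset$ and $m$ has nonempty compact values. First observe that the hypothesis that some $X_i$ meets $B$ provides a point $y_0\in X$ with $Q(x_0-y_0)<r$, so that $\rho(x_0,X)\le Q(x_0-y_0)<r$. Consequently every closest point $y^\ast\in m(x_0)$ satisfies $Q(x_0-y^\ast)=\rho(x_0,X)<r$, i.e.\ $m(x_0)\subset B$; and $B=\{y:Q(x_0-y)<r\}$ is open in the Euclidean topology because $Q$ is continuous.

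Next I would upgrade the upper semicontinuity of $m$ to its ``inverse image of an open set'' form: for every open $V\supset m(x_0)$ there is a neighbourhood $U$ of $x_0$ with $m(x)\subset V$ for all $x\in U$. This is a formal consequence of the two ingredients already contained in the proof of Theorem~\ref{continuity of m}, namely that $m$ is locally bounded and that it has closed graph, $\limsup_{x\to x_0}m(x)\subset m(x_0)$ (this last inclusion needs only $X$ closed and $Q$ continuous). Indeed, were the claim false, one could find $x_\nu\to x_0$ and $y_\nu\in m(x_\nu)\setminus V$; local boundedness yields a convergent subsequence $y_\nu\to y$, with $y\notin V$ since $V$ is open, while the closed graph forces $y\in m(x_0)\subset V$, a contradiction. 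Applying this with $V=B$ produces the desired neighbourhood $U$ of $x_0$ with $m(x)\subset B$ for every $x\in U$.

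It then remains to run the final inequality chain. Fix $x\in U$ and pick any $y^\ast\in m(x)$ (possible since $W_X=\emptyset$). By construction $y^\ast\in X\cap B=\bigcup_{j=1}^{k}X_j$, say $y^\ast\in X_{j_0}$, hence
$$\rho(x,X)=Q(x-y^\ast)\ge\inf_{y\in X_{j_0}}Q(x-y)=\rho(x,X_{j_0})\ge\min_{j=1}^{k}\rho(x,X_j).$$
On the other hand $\rho(x,X)\le\rho(x,X_j)$ for each $j$ because $X_j\subset X$, so $\rho(x,X)\le\min_{j=1}^{k}\rho(x,X_j)$, and equality holds on $U$, as asserted.

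The only step needing genuine care is the middle one — converting the semicontinuity of $m$ as stated in Theorem~\ref{continuity of m} into the statement that the open ball $B$ absorbs $m(x)$ for $x$ near $x_0$; this is exactly where the local boundedness of $m$, hence the hypothesis $L<1$, is used, and it is the place where the argument would fail for a merely $L$-pseudo-Lipschitz set with $L\ge1$. If in the decomposition $B\cap X=\bigcup X_j$ the symbol $B$ is meant to denote the closed pseudo-ball $\overline{B}(x_0,r)$, nothing changes: the hypothesis still gives a closest point to $x_0$ lying strictly inside, so $m(x_0)$ is contained in the interior, and one applies the open-set form of upper semicontinuity to that interior.
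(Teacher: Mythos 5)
Your argument is correct, and in fact the paper states this lemma without any proof, so there is nothing to compare it against; what you wrote is the natural argument and almost certainly the intended one (it mirrors the corresponding Euclidean lemma of Birbrair--Denkowski). The three steps are all sound: the hypothesis that some $X_i$ meets the open pseudo-ball gives $\rho(x_0,X)<r$ and hence $m(x_0)\subset B$; the compactness argument upgrading local boundedness plus the closed-graph inclusion $\limsup_{x\to x_0}m(x)\subset m(x_0)$ to the ``open neighbourhood absorbs $m(x)$'' form of upper semicontinuity is standard and correct; and the final two-sided inequality uses only $m(x)\subset\bigcup_j X_j$ and $X_j\subset X$ (the pairwise disjointness of the $X_j$ is never needed, which is consistent with the lemma being true as stated).

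You are also right to flag the two defects in the statement itself: as written the hypothesis ``$X_i\cap B\neq\emptyset$ for at least one $i$'' is vacuous (every nonempty $X_j$ lies in $B\cap X$), which strongly suggests the decomposition is meant for the closed pseudo-ball $\overline{B}(x_0,r)$ with at least one piece meeting the open one --- your closing remark handles that reading correctly; and the condition $L<1$ is missing from the statement but genuinely needed, since for $L\geq 1$ the multifunction $m$ may have empty values and $\rho(\cdot,X)$ may be $-\infty$ on account of points of $X\setminus B$ escaping to infinity, in which case the asserted identity can fail. Making the dependence on Theorem~\ref{continuity of m} explicit, as you do, is exactly the right way to record where that hypothesis enters.
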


\section{Central set}
The definition of a central set in the Euclidean space $\mathbb{E}$ is as follows.
Firstly we define a collection of open balls included in the complement of $X$
$$\mathcal{B}:=\lbrace B(x,r)\mid  B(x,r)\subset \mathbb{E}\backslash X\rbrace.$$
Then we call the ball $B\in\mathcal{B}$ maximal if 
$$\forall B'\in\mathcal{B}: \, B\subset B'\Rightarrow B=B'.$$
The central set $C_X$ of $X$ collects the centres of all maximal balls in $\mathcal{B}$.

While adopting the notion in the pseudo-Euclidean setting one stumbles upon a problem. If $Q$ is not positive definite the inclusion between any two balls $B,B'$ implies their common centre. Therefore the central set $C_X$ equals the whole complement of $X$. Indeed, every point $x$ of $X$ complement is a centre of a maximal ball $B(x,\rho(x))$. The definition fails to yield any valuable study.

Hence a different, more local, approach is needed. As a source of our motivation we pick an observation that for any maximal ball $B$ of centre $c$ and $x\in \overline{B}\cap X\subset \mathbb{E}$ there is $$\sup\lbrace t\geq 0\mid x\in m(x+t(c-x)/\|c-x\|)\rbrace=\|c-x\|.$$ We construct the central set in the pseudo-Euclidean spaces by collecting all the points in the form $x+r_vv$ where $x\in X,v\in \mathcal{N}_xX$ and $r_v=\sup\lbrace t\geq 0\mid x\in m(x+tv)\rbrace$. The main point in the definition is to establish a suitable object for the following Theorem which is crucial while investigating the limit passing of the medial axis families.
\begin{thm}\label{centralny_zawiera_szkielet}
Let $X$ be a closed subset of pseudo-Euclidean space.
Then $M_X\subset C_X\subset \overline{M_X}$.
\end{thm}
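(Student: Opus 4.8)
The plan is to prove the two inclusions $M_X\subset C_X$ and $C_X\subset\overline{M_X}$ separately, working from the defining formula $C_X=\{x+r_vv\mid x\in X,\ v\in\mathcal N_xX,\ r_v=\sup\{t\ge 0\mid x\in m(x+tv)\}\}$ together with the structural facts already established for $\mathcal N$ and $m$.

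For the first inclusion, take $a\in M_X$, so there are distinct $x_1,x_2\in m(a)$. Fix $x=x_1$. By Proposition~\ref{properties of N}(3) applied with $x\in\mathcal N(a)$ (note $a\in m(x)$ means $x\in\mathcal N(a)$... one must be careful: here we want $a\in m(x)$, equivalently $x\in\mathcal N(a)$, which we do have since $x_1\in m(a)$ gives $a\in\mathcal N(x_1)$ — so actually set things up as $a\in\mathcal N(x)$, i.e. $x\in m(a)$), the vector $w:=a-x$ lies in $\mathcal N_xX$. Now consider the ray $x+tw$, $t\ge 0$: at $t=1$ we are at $a$, which has $x\in m(a)$, so $r_w\ge 1$. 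I claim $r_w=1$, i.e. that $x\notin m(x+tw)$ for $t>1$. This is where the two closest points are used: for $t>1$, the point $x+tw=a+(t-1)w$ is strictly closer (in $Q$) to one of $x_1,x_2$ than to the other — more precisely, a short computation with the degree-one polynomial $R(t)=Q(x_2-(x+tw))-Q(x_1-(x+tw))$ (which vanishes at $t=1$ since both equal $\rho(a,X)$ and has nonzero slope because $x_1\ne x_2$ and $X$ is acausal) shows one of the two distances drops below $Q(x-(x+tw))=t^2Q(w)$. Hence $r_w=1$ and $a=x+r_w w\in C_X$.

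For the second inclusion $C_X\subset\overline{M_X}$, take a point $c=x+r_vv\in C_X$ with $x\in X$, $v\in\mathcal N_xX$, $r_v<\infty$ (the case $r_v=\infty$ or $v=0$ giving $c=x\in X\subset\overline{M_X}$ by... actually $X\cap M_X=\emptyset$, so one argues $x\in\overline{M_X}$ directly, or excludes it). By definition of the supremum there is a sequence $t_\nu\downarrow r_v$ (or $t_\nu\uparrow$, depending on whether the sup is attained) with $c_\nu:=x+t_\nu v\to c$ and $x\notin m(c_\nu)$, while $x\in m(c)$ by closedness (Proposition~\ref{properties of N}(5), $\limsup_{b\to c}\mathcal N(b)\subset\mathcal N(c)$, applied along $t_\nu\uparrow r_v$ gives $x\in\mathcal N(c)$). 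The idea is then: along the segment, as $t$ crosses $r_v$, the closest-point set must jump off $x$, and by upper semicontinuity of $m$ (Theorem~\ref{continuity of m}, in the pseudo-Lipschitz case) this forces either $c\in M_X$ directly, or $c\in\overline{M_X}$ because arbitrarily near $c$ there are points with two closest points — one ``inherited'' near $x$ and one coming from the new closest point of $c_\nu$. Concretely, pick $y_\nu\in m(c_\nu)$, $y_\nu\ne x$; passing to a subsequence $y_\nu\to y\in m(c)$. If $y\ne x$ then $c\in M_X$. If $y=x$, then one exhibits points on segments between $c_\nu$ and $x$ that lie in $M_X$ and converge to $c$, using the same $R(t)$-polynomial balancing argument as in Proposition~\ref{properties of N}(4).

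I expect the main obstacle to be the $y=x$ degenerate case in the second inclusion, where the new closest point $y_\nu$ collapses back onto $x$: here one cannot simply read off a second closest point at $c$ itself and must instead produce a genuine convergent sequence in $M_X$. The resolution should again be the linearity of $t\mapsto Q(z-(x+tv))$ minus $Q(x-(x+tv))$ for fixed $z$, which lets one locate, for each $\nu$, a parameter $s_\nu$ with $Q(x-(x+s_\nu v))=Q(y_\nu-(x+s_\nu v))$ and check $s_\nu\to r_v$ and that $x+s_\nu v$ has both $x$ and $y_\nu$ (or nearby competitors) as closest points — this last verification, that no third point of $X$ intrudes, is the delicate part and uses the pseudo-Lipschitz hypothesis exactly as in Theorem~\ref{continuity of m} to keep $m$ locally bounded and the relevant infimum attained. (For the inclusion $M_X\subset C_X$ no pseudo-Lipschitz assumption is needed, matching the statement's hypothesis that $X$ is merely closed, so in that half one must avoid invoking Theorem~\ref{continuity of m}; the argument above via $R(t)$ does so.)
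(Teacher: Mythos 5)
Your first inclusion $M_X\subset C_X$ is correct and is essentially the paper's argument: the paper simply cites Proposition~\ref{properties of N}(4), and your computation with the affine polynomial $R(t)$ is a re-derivation of exactly that fact (note that the sign works out automatically: $R(0)=Q(x_2-x_1)>0$ by acausality and $R(1)=0$ force the slope to be negative, so $x_1$ ceases to be closest for $t>1$; you do use acausality here, which is a standing hypothesis of Proposition~\ref{properties of N} even though the theorem's statement only says ``closed'').

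The second inclusion is where there is a genuine gap, and it is precisely the one you flag yourself. Your strategy is to track the new closest point $y_\nu\in m(c_\nu)$ as $t_\nu\downarrow r_v$ and to locate bisector points $x+s_\nu v$ equidistant from $x$ and $y_\nu$. In the degenerate case $y_\nu\to x$ (which genuinely occurs, e.g.\ when $c$ is a centre of curvature at a point where $X$ osculates the pseudo-sphere to second order, as for the vertex of a parabola in the Euclidean picture) the bisector point need not lie in $M_X$: a third point of $X$ can be strictly closer, and replacing $y_\nu$ by that intruder only restarts the same problem with a smaller parameter, so the argument does not terminate. Your appeal to local boundedness of $m$ does not resolve this, and in a general non-symmetric example the points $c_\nu$ themselves can all have a single closest point. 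The paper avoids this case entirely by a topological argument: assuming $a\in C_X$ is separated from $M_X$, the map $m$ is single-valued on a ball $\mathbb{B}(a,r)$, the maximality of $r_v$ in the definition of $C_X$ (via Proposition~\ref{properties of N}(4)) guarantees that $a$ is not of the form $tx+(1-t)m(x)$ for $x\in\mathbb{B}(a,r)$, and composing the homotopy $H(t,x)=tx+(1-t)m(x)$ with the radial projection from $a$ retracts the sphere $\mathbb{S}(a,r)$ onto a proper subset of itself --- a contradiction with the no-retraction theorem. Some global topological input of this kind (degree, retraction) is what your local, sequential approach is missing; without it the degenerate case cannot be closed.
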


\begin{proof}
The first inclusion is a consequence of Proposition~\ref{properties of N}(4). The proof of the second one is almost the same as in the standard Euclidean case (cf. \cite{Fremlin}). Take a point $a\in C_X$ and assume that $a$ is separated from the medial axis. Then there exists a radius $r>0$ such that the multifunction $m$ is univalued on $\mathbb{B}(a,r)$ and the image $m(\mathbb{B}(a,r))$ is a subset of $\mathbb{B}(m(a),\|m(a)-a\|)$. In such a case, due to Proposition\ref{properties of N}, $a\in C_X$ implies $a\neq tx+(1-t)m(x)$ for any $x\in\mathbb{B}(a,r)$. The function $H:(t,x)\rightarrow tx+(1-t)m(x)$ is therefore a homothopy. Now, its composition with the spherical projection with the centre at $a$ is a homothopy that transports the sphere $\mathbb{S}(a,r)$ onto its proper subset which is a contradiction.
\end{proof}

\begin{que}
Is the definition of the Central set equivalent to the inclusion of a germ of the ball near the intersection points with $X$?
\end{que}

\section{Nash Lemma \& Kuratowski Limits}
The following theorem is a generalisation of a famous Nash lemma \cite{Nash} to the pseudo-Euclidean spaces.
\begin{thm}
Let $X \subset \mathbb{R}^n$ be a closed, $k-$dimensional $L-$pseudo-Lipschitz set with $L<1$, then $\overline{M_X}\cap Reg_2 X=\emptyset$
\end{thm}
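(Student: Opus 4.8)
The plan is to show that if $x_0 \in Reg_2 X$—meaning a point near which $X$ is a $\mathscr{C}^2$ submanifold of dimension $k$—then $x_0 \notin \overline{M_X}$, which by Theorem~\ref{centralny_zawiera_szkielet} (or directly) amounts to producing a whole neighbourhood of $x_0$ on which $m$ is univalued. Fix such an $x_0$ and choose a small chart so that $X$ agrees near $x_0$ with the graph of a $\mathscr{C}^2$ map. The pseudo-normal space $\mathcal{N}_{x_0}X$ to $X$ at $x_0$ is, by definition, the set of $w$ with $\langle\langle w,v\rangle\rangle\le 0$ for all $v\in C_{x_0}X = T_{x_0}X$; since $T_{x_0}X$ is a linear subspace this forces $\langle\langle w,v\rangle\rangle = 0$ for all $v\in T_{x_0}X$, i.e. $\mathcal{N}_{x_0}X$ is the $Q$-orthogonal complement $(T_{x_0}X)^{\perp_Q}$, which is an $(n-k)$-dimensional linear subspace on which, by the $L<1$ pseudo-Lipschitz hypothesis, $Q$ is negative definite (the tangent directions to an acausal set are space-like, so their $Q$-orthocomplement must absorb all the negative part of $Q$). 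This is the structural input that makes the argument parallel the Euclidean Nash lemma.

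Next I would set up the \textbf{local inverse-function argument}. Consider the map $\Phi : X\times \mathcal{N}_{x_0}X \to \mathbb{R}^n$, $\Phi(x,w) = x + w$, restricted to a neighbourhood of $(x_0,0)$; equivalently, parametrise $X$ near $x_0$ by $u\mapsto \gamma(u)$ ($u$ in an open subset of $\mathbb{R}^k$, $\gamma$ of class $\mathscr{C}^2$) and $\mathcal{N}_{\gamma(u)}X$ by a $\mathscr{C}^1$ frame $w = \sum_j s_j n_j(u)$, and let $F(u,s) = \gamma(u) + \sum_j s_j n_j(u)$. At $(u_0,0)$ the differential of $F$ has columns spanning $T_{x_0}X$ (from the $u$-variables) and $\mathcal{N}_{x_0}X$ (from the $s$-variables); since $T_{x_0}X \oplus \mathcal{N}_{x_0}X = \mathbb{R}^n$ (direct sum because $Q$ restricted to $T_{x_0}X$ is nondegenerate, being positive definite on a space-like subspace), $dF_{(u_0,0)}$ is an isomorphism. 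By the inverse function theorem $F$ is a $\mathscr{C}^1$ diffeomorphism from a neighbourhood $\mathcal{U}$ of $(u_0,0)$ onto a neighbourhood $\Omega$ of $x_0$ in $\mathbb{R}^n$. Shrinking, we may assume $F^{-1}(\Omega) = \mathcal{U}$ is a product $V\times W$ with $W$ a ball around $0$.

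The final step is to upgrade this local normal-coordinate diffeomorphism into univaluedness of $m$ near $x_0$. Suppose, for contradiction, that there is a sequence $a_\nu\to x_0$ with $\#m(a_\nu)>1$, and pick $y_\nu, y_\nu'\in m(a_\nu)$ distinct. By Theorem~\ref{continuity of m}, $m$ is locally bounded and upper-semicontinuous, so $y_\nu, y_\nu'\to x_0$ (the only possible limit since $m(x_0)=\{x_0\}$ by acausality, Theorem~\ref{medial axis of acausal set}). For $\nu$ large all of $y_\nu,y_\nu',a_\nu$ lie in $\Omega$; write $a_\nu = F(u_\nu, s_\nu)$ and note that $a_\nu\in\mathcal{N}(y_\nu)$ together with Proposition~\ref{properties of N}(3) gives $a_\nu - y_\nu\in\mathcal{N}_{y_\nu}X$, i.e. the unique $F$-preimage of $a_\nu$ has base point exactly $y_\nu$; the same forces the base point to be $y_\nu'$, contradicting injectivity of $F$. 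The one gap to fill carefully—and I expect it to be \textbf{the main obstacle}—is that $a\in m(y)$ gives $a-y\in\mathcal{N}_yX$ only as a \emph{necessary} condition via the tangent-cone computation in Proposition~\ref{properties of N}(3); to run the above cleanly one wants, conversely, that near $x_0$ the relation $a-y\in\mathcal{N}_yX$ with $|s|$ small already forces $y\in m(a)$ (local realisation of the infimum along normal directions). This is exactly where the $\mathscr{C}^2$ regularity enters quantitatively: a second-order Taylor expansion of $u\mapsto Q(a - \gamma(u))$ at $u_0$, whose Hessian is $Q|_{T_{x_0}X}$ (positive definite) plus an $O(|s|)$ curvature correction, shows this function has a strict local minimum at $u_\nu$ for $|s_\nu|$ small, so the closest point of the local patch to $a_\nu$ is unique; combined with the pseudo-ball/cone-at-infinity estimate from the proof of Theorem~\ref{continuity of m} (which confines $m(a_\nu)$ to the local patch once $a_\nu$ is close enough to $X$), this yields $\#m(a_\nu)=1$, the desired contradiction. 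The $\mathscr{C}^1$-smoothness of $\rho$ off $\overline{M_X}$ from Theorem~\ref{gradient rho} is the background fact making all these limiting statements consistent.
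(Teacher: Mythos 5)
Your proposal is correct and takes essentially the same route as the paper: the paper applies the implicit function theorem to the orthogonality equations $F(u,t)=(\langle\langle u-g(t),\partial g/\partial t_i(t)\rangle\rangle)_i$, whose $t$-Jacobian at $(x_0,0)$ is invertible precisely because $Q$ is positive definite on the space-like tangent space, and then localises $m$ into the chart via upper semicontinuity --- which is exactly your tubular-neighbourhood/inverse-function argument in different clothing. The ``main obstacle'' you flag is not actually needed: the necessary condition $a-y\in\mathcal{N}_yX$ from Proposition~\ref{properties of N}(3), applied to two distinct points of $m(a_\nu)$ inside the patch, already contradicts the uniqueness of the normal decomposition, so the second-order Taylor/converse step is superfluous.
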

\begin{proof}
Take $x\in Reg_2 X$ and a local $\mathscr{C}^2$ parametrisation of $X$, a function $g:V\to X\cap U,\,g(0)=x$.
Regard function $$F:U\times V\ni(u,t)\to(\langle\langle u-g(t),\frac{\partial g}{\partial t_i}(t)\rangle\rangle)_{i=1}^k\in\mathbb{R}^k$$

Since the partial derivatives of $g$ span $T_{g(t)}X$ we have $F(u,t)=0$ iff $u-g(t)\in \mathcal{N}_{g(t)}X$.
Moreover it is possible to pick a neighbourhood $U'$ of the point $x$ that $m(x')\in U\cap X$ for all $x'\in U'$. In other words $\forall x'\in U'\exists t\in V:g(t)\in m(x')$.

Let us compute now 
$$\det \frac{\partial F}{\partial t}(x,0)=-\det \left[\langle\langle \frac{\partial g}{\partial t_i}(0),\frac{\partial g}{\partial t_j}(0)\rangle\rangle \right]_{i,j=1}^k.$$
and note that by a linear change of variables in $V$ we can assure that 
$$\langle\langle \frac{\partial g}{\partial t_i}(0),\frac{\partial g}{\partial t_j}(0)\rangle\rangle=0\text{ for } i\neq j.$$ 

In the end, the determinant will be equal to $-\sum Q( \frac{\partial g}{\partial t_i}(0))$. Since all the partial derivatives are vectors of the tangent cone $T_{x}X$, the values of $Q( \frac{\partial g}{\partial t_i}(0))$ are positive. Therefore the sum cannot vanish.

Now, the implicit function theorem allows us to uniquely solve the equation $F(u,t)=0$ with respect to the variable $t$ in a certain neighbourhood of $(x,0)$. In details, we obtain a neighbourhood $W\times T\subset U\times V$ and  a function $\tau:W\to V$ such that $F(x,t)=0$ iff $t=\tau(x)$. Shrinking $U'$ again to enforce $m(U')\subset g(T)$ we obtain therefore by setting $g(\tau(u))$ a unique vector satisfying $u-g(\tau(u))\in \mathcal{N}_{g(\tau(u))}X$ for any given $u\in U'$. Thus the multifunction $m$ is univalued in $U'$. Consequently $M_X$ is disjoint with $U'$ thus it is separated from $Reg_2 X$.
\end{proof}

An immediate corollary of the theorem forces us to search for the intersection points solely in the $Sng_2 X$. The last set splits naturally into $Sng_1X$ and $Reg_1X\cap Sng_2X$. % In this section we investigate the latter one.

%Przepatrzymy najpierw może wersję w której $T_0X=\mathbb{R}^l\times\lbrace 0\rbrace$

\begin{que}
Do the psuedo-Euclidean skeletons reach the set $X$ under the same conditions as when treated as a subset of Euclidan space?
\end{que}

\begin{lem}\label{ciagłość rho z parametrem}

Let $\lbrace X_t\rbrace_{t\in T}$ be a family of closed pseudo-L-lipschitz sets with $X_t\xrightarrow{K} X_0$, $L<1$ and $0\in\overline{T\backslash\lbrace 0\rbrace}$. Posit $\rho_t(x):=\rho(x,X_t)$ and $m_t(x)=m(x,X_t)$. Then $\lim_{(t,x)\to(0,a)}\rho_t(x)=\rho_0(a)$ and $\limsup_{t\to 0}m_t(a)\subset m_0(a).$
\end{lem}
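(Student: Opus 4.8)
The plan is to prove the two assertions separately, both using the Kuratowski convergence $X_t\xrightarrow{K} X_0$ together with the uniform pseudo-Lipschitz control $L<1$, which via the argument in Theorem~\ref{continuity of m} yields uniform coercivity: the cones at infinity of $\{Q(\cdot-a)\le c\}$ and of each $X_t$ stay uniformly disjoint, so there is a fixed compact $K_a$ (independent of $t$ near $0$) containing every point of $X_t$ that could realise, or nearly realise, $\rho_t$ on a neighbourhood of $a$. This localisation is what converts Kuratowski convergence into genuine control of the infima.

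First I would prove $\limsup_{(t,x)\to(0,a)}\rho_t(x)\le\rho_0(a)$. Pick $b\in m_0(a)$ (nonempty by Theorem~\ref{continuity of m}); by the ``$\liminf$ half'' of Kuratowski convergence there are $b_t\in X_t$ with $b_t\to b$, hence $\rho_t(x)\le Q(x-b_t)\to Q(a-b)=\rho_0(a)$. For the reverse inequality $\liminf_{(t,x)\to(0,a)}\rho_t(x)\ge\rho_0(a)$: take $(t_\nu,x_\nu)\to(0,a)$ with $\rho_{t_\nu}(x_\nu)$ converging to the liminf, and choose $y_\nu\in m_{t_\nu}(x_\nu)$ (exists and lies in the fixed compact $K_a$ by the coercivity above); passing to a subsequence $y_\nu\to y$, and using the ``$\limsup$ half'' of Kuratowski convergence, $y\in X_0$; continuity of $Q$ gives $\rho_{t_\nu}(x_\nu)=Q(x_\nu-y_\nu)\to Q(a-y)\ge\rho_0(a)$. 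Combining the two bounds gives $\lim_{(t,x)\to(0,a)}\rho_t(x)=\rho_0(a)$.

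For the second assertion, take $y\in\limsup_{t\to0}m_t(a)$, so there are $t_\nu\to0$ and $y_\nu\in m_{t_\nu}(a)$ with $y_\nu\to y$. As before $y_\nu\in K_a$, and $y\in X_0$ by Kuratowski convergence. Since $y_\nu$ realises $\rho_{t_\nu}(a)$, we have $Q(a-y_\nu)=\rho_{t_\nu}(a)$; the left side tends to $Q(a-y)$ and, by the first part applied with $x\equiv a$, the right side tends to $\rho_0(a)$. Hence $Q(a-y)=\rho_0(a)$, i.e.\ $y\in m_0(a)$, which is the claimed inclusion.

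The main obstacle is the $\liminf$ estimate in the first part: one must rule out the closest points $y_\nu$ of $X_{t_\nu}$ escaping to infinity as $t_\nu\to0$, since a priori Kuratowski convergence alone says nothing about behaviour at infinity. This is precisely where the hypothesis $L<1$ is essential and non-negotiable — it forces every $X_t$ to lie in the cone $\{L\|b-x\|_l\ge\|b-x\|_p\}$ whose cone at infinity is uniformly separated from $Q^{-1}((-\infty,0])$, giving a bound on $\|y_\nu\|$ that is uniform in $\nu$; without it (cf.\ the Example following Theorem~\ref{continuity of m}) the statement fails. A minor technical point to check is that the compact $K_a$ can indeed be chosen uniformly for all $t$ in a neighbourhood of $0$, which follows because the relevant cone at infinity depends only on $L$, not on $t$.
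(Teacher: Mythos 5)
Your argument is correct, and for the first assertion it takes a genuinely different route from the paper. The paper separates the joint limit into two pieces: it first shows the family $\{\rho_t\}$ is equi-Lipschitz near $a$ (by placing each $X_t$ inside a union of cones $S_x$ over a ball around a point of $m_0(a)$, as in Theorem~\ref{gradient rho}), then proves pointwise convergence $\rho_t(a)\to\rho_0(a)$ via the two pseudo-balls $B(a,\rho_0(a)\pm\varepsilon)$ and Kuratowski convergence, and finally combines the two by the triangle inequality $|\rho_t(x)-\rho_0(a)|\le L_0\|x-a\|+\varepsilon$. You instead attack the joint limit directly with a $\limsup$/$\liminf$ pair: a recovery sequence $b_t\to b\in m_0(a)$ for the upper bound, and compactness of the minimisers $y_\nu\in m_{t_\nu}(x_\nu)$ (forced into a fixed compact by the $t$-uniform cone condition coming from $L<1$) together with the $\limsup$ half of Kuratowski convergence for the lower bound. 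Your version buys a cleaner logical structure and makes explicit where $L<1$ enters (preventing escape of minimisers to infinity, which the paper's lower bound on $\rho_t(a)$ also needs but leaves implicit); the paper's version buys a reusable by-product, namely the uniform local Lipschitz estimate $|\rho_t(x_1)-\rho_t(x_2)|\le L_0\|x_1-x_2\|$, which is stronger than mere continuity of the limit. Your treatment of the second assertion coincides with the paper's: both deduce $Q(a-y)=\rho_0(a)$ from the already-established convergence of $\rho_t(a)$ and continuity of $Q$.
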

\begin{proof}
Take any $y\in m(a)$, then for $t$ in a certain neighbourhood of $0$ there is $X_t\cap \mathbb{B}(y,1)\neq \emptyset$ due to the Kuratowski convergence of $X_t$. Since sets $X_t$ are pseudo-L-lipschitz, the inclusion occurs $$X_t\subset \bigcup_{x\in\mathbb{B}(y,1)}S_x\text{, where } S_x=\lbrace x'\mid \, \|x'-x\|_l>L\|x'-x\|_p\rbrace.$$
Thus, following the argument similar to the one of \ref{gradient rho}, all $\rho_t$ are {L-Lipschitz} with an universal constant $L_0$  in a neighbourhood of $a$. Thus, for $x_1,x_2$ close to $a$ there is $$|\rho_t(x_1)-\rho_t(x_2)|\leq L_0\|x_1-x_2\|.$$
In the same time for small $\varepsilon>0$ we have $$B(a,\rho_0(a)+\varepsilon)\cap X_0\neq \emptyset\text{ and }\overline{B(a,\rho_0(a)-\varepsilon)}\cap X_0=\emptyset.$$
Thus for $t$ in suitably small neighbourhood of $0$ there is $\rho_0(a)-\varepsilon<\rho_t(a)<\rho_0(a)+\varepsilon$. In the end we obtain thus $$|\rho_t(x)-\rho_0(a)|\leq |\rho_t(x)-\rho_t(a)|+|\rho_t(a)-\rho_0(a)|\leq L_0\|x-a\|+\varepsilon$$ which proves the convergence.

The inner continuity of $m_t$ follows, as for any point $x\in X_0\backslash m_0(a)$ by definition there is $Q(x-a)>\rho_0(a)$. However for a sequence of points $x_t\in X_t$ converging to $ x\in\limsup m_t(a)\backslash m_0(a)$ that would mean $$\rho_0(a)=\lim_{t\to 0}\rho_t(a)=\lim_{t\to 0}Q(x_t-a)= Q(x-a)>\rho_0(a)$$ which forms a clear contradiction. 
\end{proof}

\begin{thm}
Let $\lbrace X_t\rbrace_{t\in T}$ be a family of closed pseudo-L-lipschitz sets with $X_t\xrightarrow{K} X_0$, $L<1$ and $0\in\overline{T\backslash\lbrace 0\rbrace}$. Then $$\liminf_{t\to 0} M_t\supset M_0,$$
where $M_t:=M_{X_t}.$
\end{thm}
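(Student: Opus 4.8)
The plan is to show that every point $a \in M_0$ is a Kuratowski-liminf limit of points $a_t \in M_t$, i.e. to produce for small $t$ a point $a_t \in M_{X_t}$ with $a_t \to a$. Fix $a \in M_0$, so $\# m_0(a) > 1$; pick two distinct closest points $y_1, y_2 \in m_0(a)$. By the Kuratowski convergence $X_t \xrightarrow{K} X_0$ we can choose $y_1^t, y_2^t \in X_t$ with $y_i^t \to y_i$, and for $t$ small these remain distinct (since $y_1 \ne y_2$). The idea is that near $a$ the squared-distance function $\rho_t$ "sees" both branches of $X_t$, so along a suitable path the closest-point set must jump from one branch to the other, and the transition point lies in $M_t$.

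First I would set up the relevant geometry. Consider the segment, or more robustly a small arc, joining a point $b_1$ close to $y_1$-side and $b_2$ close to $y_2$-side of $a$; concretely, using Proposition~\ref{properties of N}(4) applied to $X_0$, for $x \in (a, y_i)$ we have $m_0(x) = \{$point near $y_i\}$, in particular $m_0$ is single-valued with distinct values at $b_1 := a + \varepsilon(y_1 - a)$ and $b_2 := a + \varepsilon(y_2 - a)$ for small $\varepsilon$. By Lemma~\ref{ciagłość rho z parametrem} (the inner semicontinuity $\limsup_{t\to 0} m_t(b_i) \subset m_0(b_i)$) together with local boundedness, $m_t(b_i)$ is, for $t$ small, concentrated near the single point $m_0(b_i)$; in particular $m_t(b_1)$ and $m_t(b_2)$ are uniformly separated. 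Thus for small $t$ the continuous selection problem "can $m_t$ be single-valued and continuous along the arc from $b_1$ to $b_2$?" has a negative answer: a single-valued $m_t$ would be continuous by Theorem~\ref{continuity of m} (upper semicontinuity plus singletons forces continuity), hence would connect the two well-separated values $m_t(b_1), m_t(b_2)$ through $X_t$ along a short arc that stays near $a$ — but $X_t$ near $a$, being close to $X_0$, has its relevant pieces split near $y_1$ and near $y_2$, which (shrinking everything) are themselves separated. So along the arc there must be a parameter $s_t$ at which $m_t$ fails to be a singleton, i.e. $a_t := \gamma(s_t) \in M_{t}$. Since the whole arc shrinks to $a$ as $\varepsilon \to 0$ uniformly in small $t$, a diagonal choice gives $a_t \to a$.

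The cleanest way to run the separation argument is via the Lemma on $\rho$ being the min of the $\rho(\cdot, X_j)$ over a pseudo-ball decomposition: choose a pseudo-ball $B = B(a, \rho_0(a) + \delta)$ whose intersection with $X_0$ splits into a neighbourhood $X^{(1)}$ of $y_1$ and $X^{(2)}$ of $y_2$ (plus possibly other components), all pairwise disjoint and compact; by Kuratowski convergence $B \cap X_t$ splits the same way for $t$ small into $X_t^{(1)}, X_t^{(2)}, \dots$, and $\rho_t = \min_j \rho(\cdot, X_t^{(j)})$ near $a$. Now $\rho(\cdot, X_t^{(1)})$ and $\rho(\cdot, X_t^{(2)})$ are each $\mathscr C^1$ near $a$ off their own medial axes (by Theorem~\ref{gradient rho} applied componentwise, using $L<1$), with gradients $2A(x - m^{(1)}_t(x))$ and $2A(x - m^{(2)}_t(x))$ which converge to $2A(a - y_1)$ and $2A(a - y_2)$ respectively — distinct vectors since $y_1 \ne y_2$ and $A$ is invertible. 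Hence the set $\{\rho(\cdot, X_t^{(1)}) = \rho(\cdot, X_t^{(2)})\}$ is, near $a$, a $\mathscr C^1$ hypersurface through a point $a_t \to a$ (by the implicit function theorem, since the difference has nonvanishing gradient $2A(y_2 - y_1) + o(1)$ and vanishes at a point tending to $a$ because $\rho(a, X_t^{(1)}), \rho(a, X_t^{(2)}) \to \rho_0(a)$ from either side, up to an $o(1)$ perturbation that the implicit function theorem absorbs); on this hypersurface $\rho_t$ is realised simultaneously on two different components, so $\# m_t \ge 2$ there, giving $a_t \in M_t$ with $a_t \to a$.

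The main obstacle is the last point: arranging that $\rho(a, X_t^{(1)})$ and $\rho(a, X_t^{(2)})$ are \emph{equal} (or straddle each other) at some point near $a$, rather than one branch permanently dominating after perturbation. This is where one must use that both $y_1, y_2 \in m_0(a)$ genuinely, so $\rho(a, X^{(1)}) = \rho(a, X^{(2)}) = \rho_0(a)$ exactly for $t = 0$; the perturbation to $t \ne 0$ moves each value by $o(1)$, and one needs the difference $\varphi_t := \rho(\cdot, X_t^{(1)}) - \rho(\cdot, X_t^{(2)})$, which is $\mathscr C^1$ near $a$ with gradient bounded away from $0$ in the direction $A(y_2-y_1)$ and satisfies $\varphi_t(a) \to 0$, to have a zero near $a$ — immediate from the mean value / implicit function theorem once the gradient lower bound and $\varphi_t(a) = o(1)$ are in hand. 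Verifying the $\mathscr C^1$ regularity of each $\rho(\cdot, X_t^{(j)})$ uniformly (so the implicit function theorem applies with constants independent of small $t$) is the technical heart, and rests on Theorem~\ref{gradient rho} plus the uniform local Lipschitz/boundedness estimates from Lemma~\ref{ciagłość rho z parametrem}.
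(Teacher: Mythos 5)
Your strategy is genuinely different from the paper's (which argues by contradiction: if $M_{t}$ missed a Euclidean ball around $a$, then $m_t(a)$ is a singleton, one tracks the reach $r(t)=\sup\{s\mid m_t(a)\in m_t(sa+(1-s)m_t(a))\}$ along the unique normal segment, shows $r(t)\to 1$ using limits of empty pseudoballs, and concludes via $M_{X_t}\subset C_{X_t}\subset\overline{M_{X_t}}$ from Theorem~\ref{centralny_zawiera_szkielet}). Unfortunately your route has a structural gap. Both of your mechanisms --- the connectedness/continuous-selection argument and the ``$\rho_t=\min_j\rho(\cdot,X_t^{(j)})$ with a transversal crossing'' argument --- presuppose that the two chosen closest points $y_1,y_2\in m_0(a)$ lie in \emph{distinct components} of $X_0$ intersected with a slightly enlarged pseudoball, separated by a positive distance that persists for small $t$. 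This can simply fail: $m_0(a)$ may be a connected continuum containing $y_1$ and $y_2$ (already in the admissible case $p=0$, take $X_0$ a Euclidean circle and $a$ its centre, so $m_0(a)=X_0$; perturbed circles $X_t$ centred at $a+tv$ have $M_{t}=\{a+tv\}$). Then $X_0\cap\overline{B}(a,\rho_0(a)+\delta)$ has one component, no decomposition lemma applies, a single-valued $m_t$ can move continuously from $m_t(b_1)$ to $m_t(b_2)$ along an arc avoiding the single medial-axis point, and there is no pair of $\mathscr{C}^1$ branch functions whose difference must vanish. The theorem is still true in this case, but by a mechanism your argument does not capture; the paper's reach-based proof handles it uniformly.

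There is a second, more local defect in your ``cleanest'' version: Theorem~\ref{gradient rho} gives $\mathscr{C}^1$ smoothness of $\rho(\cdot,X_t^{(j)})$ only off $\overline{M_{X_t^{(j)}}}$, and you have no control guaranteeing that $a$ stays uniformly away from the medial axis \emph{of the individual component} $X_t^{(j)}$. If $m_0(a)\cap X^{(1)}$ contains more than one point (which your choice of $y_1$ does not preclude when $m_0(a)$ is infinite), then $a\in M_{X^{(1)}}$ and the gradient $2A(x-m^{(1)}_t(x))$ is not even single-valued near $a$, so the implicit function theorem step has no foundation. Your second paragraph's topological version is the more robust of the two sketches and does work when the splitting into well-separated pieces is available (it is essentially the classical connectedness argument), but it cannot be the whole proof for the reason above. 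To repair the approach you would either have to treat the non-splitting case separately (e.g.\ by an argument in the spirit of the paper's central-set sandwich) or abandon the component decomposition altogether.
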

\begin{proof}
Suppose that there exists a point $$a\in M_0\backslash\liminf_{t\to 0}M_t.$$
It implies existence of a sequence $T\ni t_\nu\to0$ and an euclidean ball $B$ centred at $a$ such that $$B\cap M_{t_\nu}=\emptyset\text{, for all }\nu\in\mathbb{N}.$$ By shrinking the parameter space we can therefore assume that the intersection is empty for all $t\in T$. Mind that it does not change the convergence $X_t\to X_0$ in the induced topology.

For every $t\in T$ let $m_t$ denote the closest points multifunction for $X_t$. Since $a$ has a single closest point in $X_t$, it is possible to define $$r(t):=\sup\lbrace s\geq 0\mid m_t(a)\in m_t(a(s,t))\rbrace$$ for every $t\in T$, where $a(s,t):=sa+(1-s)m_t(a)$.

Note that since $a(1,t)=a$, the supremum is always greater or equal to one. Moreover, the point $a(r(t),t)$ belongs to the central set of $X_t$ whenever $r(t)<\infty$.

Our aim now is to prove that $\lim_{t\to 0}r(t)=1$. Since $C_X\subset\overline{M_X}$ such limit would contradict $B\cap M_t=\emptyset$ for small $t$. Clearly $\liminf r(t)\geq 1$ since $r(t)$ is at least equal to one. Suppose then that $\limsup r(t)>1$. In such a case it is possible to pick $c>0$ and a sequence $t_\nu\to 0$ for which the pseudoballs $B_\nu:=B(a(1+c,t_\nu),Q(a(1+c,t_\nu)-m_{t_\nu}(a))$ are disjoint with $X_{t_\nu}$. This sequence of balls converges to the closure of a ball $B_0:=B(a(1+c,0),Q(a(1+c,0)-y),$ where $y$ is a certain point in $m_0(a)$ (cf Lemma~\ref{ciagłość rho z parametrem}).
Now, should $B_0\cap X_0=\emptyset$, the point $y$ would be the closest to $a(1+c,0)$ and $\#m(a)=1$ since $a\in (y,a(1+c,0)$ which contradicts $a\in M_0$. On the other hand any point $x\in B_0\cap X_0$ would contradict the convergence of $X_t\to X_0$. 

Therefore, the set $M_0\backslash \liminf M_t$ ought to be empty.

\end{proof}

\section{Medial axis for hypersurfaces}
Observe firstly that under the restrictions introduced so far, the only scenarios for $X$ to be a hypersurface would be to consider $\mathbb{R}_{(l,p)}$ with $p$ equal zero or one. The first instance is precisely the Euclidean case. For $p=1$ we have what follows.

\begin{thm}
Assume that $X$ is $k-$codimensional $1-$pseudo-Lipschitz closed subset of $\mathbb{R}_{(l,p)}$ and $a\in X$. Let $x_1,\ldots, x_{k+1}$ be points in $ \mathbb{R}^n$  with $a\in m(x_i)$ for $i=1\ldots k+1$ such that the vectors $x_i-a$ are linearly
independent. Then $a\in Sng_1X$.
\end{thm}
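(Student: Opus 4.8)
\medskip
\noindent\textbf{Proof plan.}

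The plan is to prove the contrapositive: assuming $a\notin Sng_1X$, that is $a\in Reg_1X$, I will show no such $k+1$ vectors can exist. Since $X$ is $k$-codimensional, at a point $a\in Reg_1X$ there is a neighbourhood in which $X$ is a $\mathscr{C}^1$-submanifold of dimension $n-k$; hence the tangent cone $C_aX$ is the full linear tangent space $T_aX$, of dimension $n-k$. Everything then reduces to a dimension count for the set of admissible normal directions at $a$.

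Next I would observe that $a\in m(x_i)$ means $Q(x_i-b)\ge Q(x_i-a)$ for every $b\in X$. Given a $\mathscr{C}^1$ curve $\gamma$ in $X$ with $\gamma(0)=a$, the function $s\mapsto Q(x_i-\gamma(s))$ thus attains a minimum at $s=0$, so differentiating yields $\langle\langle x_i-a,\gamma'(0)\rangle\rangle=0$. As $\gamma'(0)$ runs over all of $T_aX$, this gives $x_i-a\in (T_aX)^{\perp}$, the orthogonal complement with respect to the pseudo-scalar product (equivalently, this is Proposition~\ref{properties of N}(3) combined with the fact that $C_aX=T_aX$ is a linear subspace, so $\langle\langle w,v\rangle\rangle\le 0$ for all $v\in T_aX$ forces equality). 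Because $\langle\langle\cdot,\cdot\rangle\rangle$ is nondegenerate, $\dim(T_aX)^{\perp}=n-\dim T_aX=k$. But $x_1-a,\dots,x_{k+1}-a$ are $k+1$ linearly independent vectors all living in this $k$-dimensional space --- impossible. Hence $a\in Sng_1X$.

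The points needing care, rather than any serious obstacle, are the two standard facts invoked: that the tangent cone of a pure $(n-k)$-dimensional set at a $\mathscr{C}^1$-regular point is exactly the linear space $T_aX$ (not merely a proper sub-cone), and that a nondegenerate symmetric bilinear form satisfies $\dim V+\dim V^{\perp}=n$ for every subspace $V$, even when $V$ is $Q$-degenerate so that $V\cap V^{\perp}\neq\{0\}$. It is worth noting that the argument uses neither $\mathscr{C}^2$-smoothness nor the strict bound $L<1$ --- only $\mathscr{C}^1$-regularity at $a$ and the defining minimality of the points $x_i$.
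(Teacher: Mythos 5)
Your argument is correct and is essentially the paper's proof in dual form: the paper shows $T_aX\subset\bigcap_{i=1}^{k+1}H_i$ where $H_i$ is the $\langle\langle\cdot,\cdot\rangle\rangle$-orthogonal hyperplane of $x_i-a$ and counts codimensions, while you place the $k+1$ independent vectors $x_i-a$ in $(T_aX)^{\perp}$ and count dimensions --- the same nondegeneracy-plus-linear-independence count. Your explicit derivation of $\langle\langle x_i-a,\gamma'(0)\rangle\rangle=0$ by differentiating along curves is just a spelled-out version of the paper's appeal to Proposition~\ref{properties of N}(3).
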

\begin{proof}
Define the hyperplanes $H_i$ by $$H_i:=\lbrace v\in\mathbb{R}_{(l,p)}\mid \,\langle\langle x_i-a,v\rangle\rangle=0\rbrace.$$ If we had $a\in Reg_1 X$ then by the choice of $a$ we would have $x_i-a\in \mathcal{N}_aX$ for $i=1,\ldots,k+1$, and $T_aX\subset \bigcap_{i=1}^{k+1} H_i$. However, the intersection of $H_i$ is transversal, thus its codimension equals $k+1$  which is a contradiction. 
\end{proof}

\begin{thm}
Take $a\in X$ and assume that for all $x$ in some neighbourhood $U$ of $x_0\in \mathbb{R}^n$ we have $a\in m(x)$. Then $a\in Sng_1 X$.
\end{thm}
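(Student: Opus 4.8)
The plan is to deduce this as a corollary of the preceding theorem. First I would note that the hypothesis says exactly that $U\subseteq\mathcal N(a)$, and that we may take $U$ to be open in $\mathbb R^n$. The one substantial step is then to extract from $U$ a configuration feeding the preceding theorem: I would produce points $x_1,\dots,x_{k+1}\in U$ such that $x_1-a,\dots,x_{k+1}-a$ are linearly independent. This is possible because $U-a$ is a nonempty open set, hence is contained in no proper linear subspace (a proper subspace has empty interior); so I would build the vectors $v_i:=x_i-a$ one at a time, at the $(j+1)$-st stage choosing any $v_{j+1}\in(U-a)\setminus\operatorname{span}(v_1,\dots,v_j)$. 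With the $x_i$ in hand, the hypothesis gives $a\in m(x_i)$ for every $i$, and the preceding theorem delivers $a\in Sng_1X$.

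The main point to watch is the dimension bookkeeping: the construction needs $k+1\leq n$, i.e. $\dim X=n-k\geq1$. I would flag this, because for $\dim X=0$ the assertion is in fact false -- an isolated point of $X$ lies in $Reg_1X$ and is yet the nearest point of $X$ to every point of a nonempty open ``Voronoi'' region -- so $\dim X\geq1$ is a tacit standing assumption here, which is automatic in this section since it treats hypersurfaces. Beyond this, nothing in the argument is substantial; the statement is genuinely an immediate corollary.

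If one prefers to avoid quoting the previous theorem, the same conclusion follows directly from Proposition~\ref{properties of N}(3): if $a\in Reg_1X$ then $C_aX=T_aX$ is a linear subspace of dimension $n-k$, so, because $T_aX=-T_aX$, the cone $\mathcal N_aX$ collapses to the $Q$-orthogonal complement of $T_aX$, which has dimension $k$ since $Q$ is nondegenerate; hence $\mathcal N(a)\subseteq a+(T_aX)^{\perp}$ lies in an affine subspace of dimension $k<n$ and cannot contain the open set $U$, a contradiction.
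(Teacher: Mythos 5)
Your argument is essentially identical to the paper's: the paper's entire proof is ``apply the last theorem to any $\operatorname{codim}X+1$ linearly independent vectors $x_i-a\in U-a$'', and your extraction of such vectors from the nonempty open set $U-a$ is precisely the (implicit) justification of that step. Your caveat that the construction needs $\dim X\geq 1$ (the isolated-point case being a genuine degenerate exception) and your alternative route via Proposition~\ref{properties of N}(3) are both sound, but they are embellishments rather than departures from the paper's approach.
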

\begin{proof}
Simply apply the last theorem to any $\text{codim} X+1$ linearly independent vectors $x_i-a\in U-a$.
\end{proof}

\begin{thm}
Let $X$ be a $1-$pseudo-Lipschitz closed hypersurface of $\mathbb{R}_{(l,p)}$ and $a\in\mathbb{R}_{(l,p)}\backslash M_X$ be such that $m(a)\in Reg_1 X$. Then for $S=B\cap\Gamma$ where $\Gamma$ is any $\mathscr{C}^1$ submanifold with $T_a\Gamma$ transversal to $\mathbb{R}(a-m(a))$ and $B$ is sufficiently small open ball centred at $a$, the mapping $m|S:S\to m(S)$ is a homeomorphism onto the open subset $m(S)\subset Reg_1 X$. 
\end{thm}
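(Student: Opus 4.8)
The plan is to exhibit $m|_S$ as a continuous injection of one $(n-1)$-manifold into another and then apply invariance of domain. Throughout write $y_0:=m(a)$ and $v:=a-y_0$. We may assume $a\notin X$ (otherwise $v=0$ and transversality forces $\Gamma$ to be open, making the statement degenerate), so $v\neq0$; moreover transversality of $T_a\Gamma$ with the line $\mathbb{R}v$ together with $\operatorname{codim}X=1$ forces $\dim\Gamma=n-1$, so $S=B\cap\Gamma$ is an $(n-1)$-dimensional topological manifold. At the regular point $y_0$ of the hypersurface the tangent hyperplane $T_{y_0}X$ is nondegenerate (spacelike, as a pseudo-Lipschitz $X$ has non-timelike tangent directions), hence $\mathcal{N}_{y_0}X=(T_{y_0}X)^{\perp_Q}$ is a timelike line; since $a\in\mathcal{N}(y_0)$, Proposition~\ref{properties of N}(3) shows this line is spanned by $v$. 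Put $\nu:=v/\|v\|$.

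Because $a\notin M_X$, Theorem~\ref{continuity of m} gives $m(a)=\{y_0\}$ and $m(x)\to y_0$ as $x\to a$; and since $y_0\in Reg_1X$ we may fix a $\mathscr{C}^1$ chart $\varphi\colon\Omega\xrightarrow{\sim}\mathbb{R}^{n-1}$ with $y_0\in\Omega\subseteq Reg_1X$ and a continuous unit normal field $y\mapsto n(y)\in\mathcal{N}_yX$ on $\Omega$ with $n(y_0)=\nu$. Set $\delta:=\operatorname{dist}(\nu,T_a\Gamma)$, which is positive by transversality, and shrink $B$ about $a$ so that: (i) $m(\overline B)\subseteq\Omega$; (ii) $\operatorname{dist}\bigl(n(m(x))/\|n(m(x))\|,\ \mathbb{R}\nu\bigr)<\varepsilon$ for all $x\in\overline B$; (iii) $\Gamma\cap B$ is a $\mathscr{C}^1$-graph over $a+T_a\Gamma$ whose slope is everywhere $<\varepsilon$. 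Here $\varepsilon>0$ is chosen so that $(C+1)\varepsilon<\delta$, with $C$ the constant appearing in the mean-value estimate attached to (iii).

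I claim that for such $B$ the restriction $m|_S$ is single-valued and injective. The common mechanism: if $x\in S$ and $y\in m(x)$ then $x\in\mathcal{N}(y)$, so $x-y\in\mathcal{N}_yX=\mathbb{R}n(y)$ by Proposition~\ref{properties of N}(3), hence the Euclidean direction of $x-y$ lies within $\varepsilon$ of $\pm\nu$ by (ii). Injectivity: were $x_1\neq x_2$ in $S$ with $m(x_1)=m(x_2)=y$, then $x_1-x_2\in\mathbb{R}n(y)$, so $w:=(x_1-x_2)/\|x_1-x_2\|$ satisfies $\operatorname{dist}(w,\mathbb{R}\nu)<\varepsilon$; but $x_1,x_2\in\Gamma\cap B$, so by (iii) and the mean value inequality the component of $x_1-x_2$ orthogonal to $T_a\Gamma$ has norm $<C\varepsilon\|x_1-x_2\|$, whence $\operatorname{dist}(w,T_a\Gamma)<C\varepsilon$. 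Then $\delta=\operatorname{dist}(\nu,T_a\Gamma)\le\operatorname{dist}(w,T_a\Gamma)+\operatorname{dist}(w,\mathbb{R}\nu)<(C+1)\varepsilon<\delta$, a contradiction. Single-valuedness is analogous: if $y_1\neq y_2$ lay in $m(x)$ for some $x\in S$, then $y_1-y_2=(x-y_2)-(x-y_1)$ is a nonzero vector within $O(\varepsilon)$ of $\mathbb{R}\nu$, while, being a chord of $X$ near the regular point $y_0$, its direction is within $o(1)$ (as $B$ shrinks) of $T_{y_0}X$; since $\nu\notin T_{y_0}X$ the two are incompatible once $B$ is small. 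Finally $m|_S$ is continuous (Theorem~\ref{continuity of m}), single-valued, and by (i) maps $S$ into $\Omega\subseteq Reg_1X$.

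To finish, $S$ and $Reg_1X$ are topological manifolds of the same dimension $n-1$, so the continuous injection $m|_S\colon S\to Reg_1X$ is open by invariance of domain; hence $m(S)$ is open in $Reg_1X$ and $m|_S\colon S\to m(S)$ is a homeomorphism. I expect the real difficulty to sit in the third paragraph --- showing that the transversal slice $S$ stays off $M_X$ and is met injectively by $m$ --- where one plays ``normal directions of $X$ near $y_0$ are nearly $\nu$'' against ``$\Gamma$ is nearly tangent to $T_a\Gamma$'', the only reconciliation being $\nu\notin T_a\Gamma$. The single-valuedness half is the more delicate one, and this is where one must use that the mere $\mathscr{C}^1$-regularity is tame (definability), so that the normal field $n$ does not oscillate. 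Everything else --- continuity of $m$ from Theorem~\ref{continuity of m}, the inclusion $m(S)\subseteq Reg_1X$, and the closing appeal to invariance of domain --- is routine.
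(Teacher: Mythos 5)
Your overall route coincides with the paper's: exhibit $m|_S$ as a continuous injection of the $(n-1)$-manifold $S$ into the $(n-1)$-manifold $Reg_1 X$ and conclude by invariance of domain, with injectivity coming from the fact that at a regular point $y$ of the hypersurface the normal set is a single line, so two preimages of $y$ would be collinear with $y$ and then transversality of $\Gamma$ to the (nearly constant) normal direction rules out two intersection points. The paper derives the collinearity from its preceding theorem (two linearly independent vectors of $\mathcal{N}_yX$ would force $y\in Sng_1X$), you derive it from $\mathcal{N}_yX=(T_yX)^{\perp_Q}$ being one-dimensional; this is the same linear algebra, and your quantitative $\varepsilon$--$\delta$ bookkeeping is a legitimate expansion of the paper's ``provided $B$ is small enough''.

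The one step where your argument genuinely breaks is the single-valuedness paragraph. You claim that if $y_1\neq y_2$ both lie in $m(x)$ then $y_1-y_2=(x-y_2)-(x-y_1)$ is within $O(\varepsilon)$ of $\mathbb{R}\nu$ because each of $x-y_1$, $x-y_2$ is. That inference is false: two vectors of comparable length, each within angle $\varepsilon$ of $\nu$, can have a difference pointing essentially orthogonally to $\nu$ (compare $t\nu$ with $t(\nu+\varepsilon e)$, $e\perp\nu$), and this is exactly the configuration one expects here since $Q(x-y_1)=Q(x-y_2)$ forces the two normal segments to have comparable lengths. So the advertised clash with the chord direction being nearly tangent to $T_{y_0}X$ never materialises; what the correct computation $0=2\langle\langle x-y_2,y_2-y_1\rangle\rangle+Q(y_2-y_1)$ yields is only that $\|y_1-y_2\|$ is small, i.e.\ the upper semicontinuity you already had. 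To be fair, the paper does not prove this step either: it asserts that $m$ is univalued in a neighbourhood of $a$ ``by assumption'', which is itself a gap, since $a\notin M_X$ does not by itself keep a whole neighbourhood of $a$ out of $M_X$ for a merely $\mathscr{C}^1$ regular piece of $X$ (positive reach can fail for $\mathscr{C}^1$ hypersurfaces); closing it requires either higher regularity, an appeal to $\overline{M_X}$ rather than $M_X$, or the definability hypothesis used elsewhere in the paper. Two smaller points: transversality of $T_a\Gamma$ to $\mathbb{R}(a-m(a))$ gives $\dim\Gamma\geq n-1$, not equality, so $\dim\Gamma=n-1$ should be read as part of the hypothesis; and your identification of $\mathcal{N}_{y_0}X$ with a timelike line silently assumes $T_{y_0}X$ is nondegenerate for $Q$, which a $1$-pseudo-Lipschitz bound alone does not guarantee (null tangent directions are not excluded).
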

\begin{proof}
By assumption, $m$ is univaluead in a neighbourhood $U$ of $a$, and thus continuous. Therefore for an open $V$ such that $V\cap X\subset Reg_1 X$ we can find an open ball $B$ centered at $a$ with $m(B)\subset V$. By the previous proposition for ant two points $x_1,x_2\in S$ we have an inclusion between segments $[x_1,m(a)],[x_2,m(a)]$ which (provided $B$ is small enough) means that $x_1=x_2$. Thus, $m|S$ is injective, and so by Brouwer Domain Invariance Theorem it is a homeomorphism onto an open subset $m(S)\subset Reg_1 X$ 
\end{proof}

\begin{rem}
The previous theorem includes, in particular, the case when $\Gamma$ equals the sphere $\mathbb{S}(m(a),\|a-m(a)\|)$.
\end{rem}

\printbibliography 
\end{document}